\newtheorem{theorem}[equation]{Theorem}
\newtheorem{prop}[equation]{Proposition}
\newtheorem{corol}[equation]{Corollary}
\theoremstyle{definition}
\newtheorem{definition}[equation]{Definition}
\theoremstyle{remark}
\newtheorem{remark}[equation]{Remark}
\numberwithin{equation}{section}
\newcommand{\CC}{\mathbb C}
\newcommand{\ZZ}{\mathbb Z}
\newcommand{\QQ}{\mathbb Q}
\newcommand{\oo}{\mathscr O}
\newcommand{\dd}{\mathscr D}
\newcommand{\scrt}{\mathscr T}
\newcommand{\mm}{\mathcal M}
\newcommand{\nn}{\mathcal N}
\newcommand{\VV}{\mathbb V}
\newcommand{\vv}{\mathcal V}
\newcommand{\fdot}{F_{\bullet}}
\newcommand{\LL}{ L}
\def\bysame{\leavevmode\hbox to3em{\hrulefill}\thinspace}
\newcommand{\abs}[1]{\lvert#1\rvert}
\begin{document}

\title{Vanishing and Injectivity theorems for Hodge Modules}
%    Information for first author
\author{Lei Wu}
%    Address of record for the research reported here
\address{Deparment of Mathematics, Northwestern University, 2033 Sheridan Road, Evanston, IL 60208, USA }
\email{lwu@math.northwestern.edu}
%    Current address

\maketitle
\begin{abstract}

\setlength{\parindent}{0pt} \setlength{\parskip}{1.5ex plus 0.5ex
minus 0.2ex} %\noindent
We prove a surjectivity theorem for the Deligne canonical extension of a polarizable variation of Hodge structure with quasi-unipotent monodromy at infinity along the lines of Esnault-Viehweg. We deduce from it several injectivity theorems and vanishing theorems for pure Hodge modules. We also give an inductive proof of Kawamata-Viehweg vanishing for the lowest graded piece of the Hodge filtration of a pure Hodge module using mixed Hodge modules of nearby cycles.
\end{abstract}
\section{Introduction}
When $X$ is a smooth projective variety, the famous Kodaira-Nakano Vanishing theorem says that sufficiently high cohomologies vanish when $\Omega^p_X$ is twisted by any ample line bundle. Saito proved a more general vanishing theorem \cite{Sai90} using his theory of Hodge modules.
\begin{theorem}[\textbf{Kodaira-Saito Vanishing Theorem}]\label{KST}
Let $X$ be a complex projective variety with an ample line bundle $\LL$, and $M$ a mixed Hodge module on $X$. Then 
\[H^i(X, {\rm Gr}^{F}_{k}{\rm DR(M)}\otimes \LL)=0,    \ i>0\]
\end{theorem}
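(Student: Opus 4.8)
The plan is to proceed by induction on $\dim \operatorname{supp} M$, first reducing the mixed case to the pure case, and then reducing the pure case to a statement about a single polarizable variation of Hodge structure, to which the surjectivity theorem for Deligne canonical extensions (announced in the abstract) applies. First I would use the weight filtration: every mixed Hodge module carries a finite filtration $W_{\bullet}M$ with pure graded quotients $\operatorname{Gr}^{W}_{\ell}M$. Since the de Rham functor is exact and the Hodge filtration $\fdot$ is strict, $\operatorname{Gr}^{F}_{k}\operatorname{DR}(M)$ admits a finite filtration whose associated graded is $\bigoplus_{\ell}\operatorname{Gr}^{F}_{k}\operatorname{DR}(\operatorname{Gr}^{W}_{\ell}M)$. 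Tensoring with the locally free sheaf $\LL$ and running the long exact sequence in hypercohomology, the vanishing for $M$ reduces to the same vanishing for each pure piece. Hence we may assume $M$ is pure.

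A pure Hodge module decomposes by strict support, so I would further assume $M$ has strict support an irreducible subvariety $Z\subseteq X$. By Saito's structure theorem, $M$ is then the intermediate extension of a polarizable variation of Hodge structure $\vv$ defined on a smooth Zariski-dense open $U\subseteq Z$ with quasi-unipotent local monodromy at infinity. I would choose a log resolution $\mu\colon \widetilde{Z}\to Z$ with $\widetilde{Z}$ smooth projective and $\widetilde{Z}\setminus\mu^{-1}(U)=D$ a simple normal crossings divisor. Using the compatibility of $\operatorname{DR}$ with proper pushforward and the description of $\fdot$ on the underlying $\dd$-module near $D$, one identifies $\operatorname{Gr}^{F}_{k}\operatorname{DR}(M)$, up to a direct summand in $R\mu_{*}$, with a graded piece of the logarithmic de Rham complex of the Deligne canonical extension $(\widetilde{\vv},\fdot)$, namely the Higgs-type complex
\[
\Big[\, \operatorname{Gr}^{F}\widetilde{\vv}\;\longrightarrow\;\Omega^{1}_{\widetilde{Z}}(\log D)\otimes\operatorname{Gr}^{F}\widetilde{\vv}\;\longrightarrow\;\cdots\;\longrightarrow\;\Omega^{\dim Z}_{\widetilde{Z}}(\log D)\otimes\operatorname{Gr}^{F}\widetilde{\vv}\,\Big].
\]

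Granting this identification, the surjectivity theorem applied to $(\widetilde{\vv},\fdot)$ twisted by $\mu^{*}\LL$ yields surjectivity of the relevant maps on hypercohomology, which forces the higher cohomology of each graded piece of the twisted complex to vanish; pushing forward by $\mu$ and using ampleness of $\LL$ on $X$ recovers the stated vanishing. The main obstacle is Step 2: precisely matching $\operatorname{Gr}^{F}_{k}\operatorname{DR}$ of the \emph{intermediate} extension with the logarithmic complex of the \emph{canonical} extension, which requires controlling the discrepancy between the two extensions along $D$ and tracking the quasi-unipotent monodromy. To promote $\mu^{*}\LL$ from nef-and-big to an honestly positive twist, and to reduce quasi-unipotent monodromy to the unipotent case, I expect to pass to a cyclic cover branched along a smooth divisor in $\abs{\LL^{m}}$ in the spirit of Esnault-Viehweg, a construction that already underlies the surjectivity theorem itself.
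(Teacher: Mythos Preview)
The paper does not prove Theorem~\ref{KST}; it is quoted as a known result of Saito, with detailed proofs referenced to \cite{Pop} and \cite{Sch14a}. So there is no in-paper argument to compare against.

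Your outline nonetheless has real gaps. The surjectivity theorem in the abstract (Theorem~\ref{main}) concerns only ${\rm Gr}^{\textup{first}}_F{\rm DR}_{(X,D)}(\tilde{\mathcal V})$, the \emph{first} nonzero graded piece of the logarithmic de Rham complex; it gives no information about the intermediate pieces ${\rm Gr}^F_k$, so it cannot deliver the full statement for all $k$. Even for the first piece, surjectivity of a map between hypercohomology groups does not by itself force either group to vanish---in this paper vanishing is squeezed out only by pairing injectivity with Serre vanishing (Corollary~\ref{kv}), an extra step your sketch omits. Finally, the identification you invoke in Step~2 is not available in the form you need: the filtered quasi-isomorphism \eqref{qi} compares the logarithmic complex of $\tilde{\mathcal V}$ with ${\rm DR}$ of the \emph{localization} $j_*j^{-1}M$, not of the intermediate extension $M$, and for interior graded pieces the two genuinely differ (the match in \eqref{dualiso} is specific to the extremal index $q(M)$). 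The Esnault--Viehweg-style proof in \cite{Sch14a} avoids this by working directly with $E_1$-degeneration for a mixed Hodge module constructed from a cyclic cover along a smooth member of $\lvert L^N\rvert$, rather than trying to route through Theorem~\ref{main}.
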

A detailed discussion of the proof of this theorem can be found in \cite{Pop}, another proof following the approach of Esnault-Viehweg in \cite{Sch14a}. If X is smooth, taking $M=\QQ_X^H:=(\omega_X, \fdot, \mathbb{Q}_X)$, the pure Hodge module corresponding to the trivial variation of Hodge structure on $X$, we have ${\rm Gr}^F_{-p}{\rm DR}(M)=\Omega^p_X[n-p]$, and so Saito's result implies Kodaira-Nakano vanishing. For arbitrary $M$, let $S(M)$ be the lowest graded piece of the Hodge filtration. In particular, \[H^i(X,S(M)\otimes \LL)=0, \  i>0,\] which specializes to Kodaira vanishing as well.\\
\indent On the other hand, Kodaira vanishing can be generalized by replacing ample divisors by nef and big divisors (or even $\mathbb{Q}$-divisors). This generalization is the so-called Kawamata-Viehweg vanishing theorem. In \cite{Pop}, M. Popa proved a version of Kawamata-Viehweg vanishing for some special Hodge modules, and he suggested that a better result would be true. In this paper I remove the extra hypothesis in \cite{Pop} and prove a Kawamata-Viehweg type statement for pure Hodge modules in full generality. 
\begin{theorem} \label{kvt1}
Let X be a complex projective variety, $\LL$ a nef and big line bundle, and $M$ a polarizable pure Hodge module with strict support $X$. Then 
\[H^i(X, S(M)\otimes \LL)=0,    \ i>\ 0\]
\end{theorem}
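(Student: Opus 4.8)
The plan is to bootstrap from the ample case, which is precisely the Kodaira--Saito vanishing theorem (Theorem~\ref{KST}) applied to the lowest graded piece: when $\LL$ is ample one already has $H^i(X, S(M)\otimes\LL)=0$ for $i>0$. To pass from ample to nef and big I would argue by induction on $n=\dim X$, the engine being the nearby cycle functor for mixed Hodge modules. First I invoke Kodaira's lemma: since $\LL$ is nef and big there exist an integer $m>0$ and a nonzero section $s\in H^0(X,\LL^{m})$ with zero divisor $D$ such that $A:=\LL^{m}\otimes\oo_X(-D)$ is ample. After replacing $X$ by a log resolution $\mu\colon X'\to X$ of the pair $(X,D)$ I may moreover assume $D$ reduced and simple normal crossings; this replacement is harmless because the lowest Hodge piece $S(-)$ is compatible with proper birational pushforward (a consequence of Saito's strictness together with the decomposition theorem), so the vanishing downstairs follows from the vanishing upstairs via the projection formula, the pullback $\mu^*\LL$ remaining nef and big.

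The core construction is the nearby cycles along $s$. I would consider $\psi_s M$, a mixed Hodge module supported on $D$, decomposed over monodromy eigenvalues $\lambda=e^{2\pi i\alpha}$ with $\alpha\in(0,1)$. Geometrically this encodes the degree-$m$ cyclic cover branched along $D$, whose eigen-line-bundle decomposition separates the twist by $\LL$ from the ample factor $A$. The key point, which is the heart of the argument, is to establish a relation between $S(M)\otimes\LL$ and the lowest Hodge pieces $S(\psi_{s,\lambda}M)$ on $D$ together with the behaviour of $S(M)$ on the open complement $U=X\setminus D$. Concretely I would produce an exact sequence (or a short filtration) whose outer terms are, on one side, $S(M)\otimes\LL$ restricted over $U$, where $\LL^{m}|_U\cong A|_U$ is the restriction of an ample bundle, and on the other side the sheaves $S(\psi_{s,\lambda}M)$ on $D$ twisted by $A|_D$, which carries genuine ample positivity.

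The vanishing then follows by feeding two inputs into the long exact cohomology sequence attached to the nearby/vanishing cycle triangle. On $U$ the relevant cohomology vanishes by Kodaira--Saito, since $\LL^{m}|_U\cong A|_U$ is ample. On $D$ the modules $\psi_{s,\lambda}M$ are mixed Hodge modules on a projective variety of dimension $n-1$, and their lowest pieces, twisted by $\LL|_D$ (nef and big) with the extra ample positivity $A|_D$ supplied by the nearby cycle, vanish in positive degrees by the induction hypothesis combined with Kodaira--Saito. Together these force $H^i(X,S(M)\otimes\LL)=0$ for $i>0$, closing the induction.

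The hard part will be the compatibility lemma underlying the second paragraph: controlling the lowest graded piece of the Hodge filtration under $\psi_s$. Since nearby cycles take a pure Hodge module only to a mixed one, I must track the weight filtration and use the $\mathrm{can}$ and $\mathrm{var}$ maps, together with Saito's strict compatibility of the Hodge filtration $F$ with the $V$-filtration, to identify $S(\psi_{s,\lambda}M)$ with the expected twist of $S(M)|_D$. Arranging the twisting sheaves to come out as $A|_D$ (ample) rather than merely nef is exactly what makes the induction close, and the eigenvalue bookkeeping $\alpha\in(0,1)$ must be matched to the round-down in Kodaira's lemma so that \emph{exactly} one copy of $\LL$ is extracted. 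A secondary technical point is the functoriality of $S(-)$ under the modification $\mu$ used to reduce to the simple normal crossings case.
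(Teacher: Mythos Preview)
Your overall strategy---bootstrap from Kodaira--Saito via an adjunction/nearby-cycle induction on dimension---is exactly the spirit of the paper's first proof (Section~5). But the execution you sketch diverges from what actually works, and one step is genuinely wrong.

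The problematic step is ``on $U$ the relevant cohomology vanishes by Kodaira--Saito, since $\LL^m|_U\cong A|_U$ is ample.'' Kodaira--Saito is a statement on a \emph{projective} variety; $U=X\setminus D$ is open, so there is no vanishing theorem to invoke there. Accordingly there is no useful ``$U$-part versus $D$-part'' triangle whose open term dies. The paper's induction is organized differently: after a Kawamata covering (not just a log resolution) one arranges $\LL'\simeq\oo_Y(H+E')$ with $H$ ample and $E'$ a \emph{reduced} SNC divisor, and then runs a \emph{double} induction---on $\dim$ and on the number of components of $E'$---peeling off one component at a time via the restriction sequence
\[
0\to Q'(-H-\textstyle\sum_{i\le k+1}E'_i)\to Q'(-H-\sum_{i\le k}E'_i)\to Q'|_{E'_{k+1}}(-H-\sum_{i\le k}E'_i)\to 0,
\]
with base case $E'=\emptyset$ being Kodaira--Saito on all of $Y$. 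Here $Q'={\rm Gr}^F_{q(M)}\tilde{\mathcal V}$ is the Serre-dual object to $S(M)$, which is locally free in the SNC setting; working with $Q'$ rather than $S(M)$ is what makes the restriction sequence exact. The Kawamata cover is essential: it forces \emph{unipotent} monodromy, so that $\tilde{\mathcal V}|_{E'_{k+1}}$ carries a monodromy weight filtration whose graded pieces are again Deligne canonical extensions of polarizable VHS on $E'_{k+1}$ (this is the ``nearby cycle'' content), and the dimension induction applies. Your proposal omits the covering, the passage to $Q'$, and the component-by-component induction; without these the ``compatibility lemma'' you flag as the hard part cannot be made precise.
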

This can be approached in two different ways. I first provide an inductive approach, a strategy similar to Kawamata's original method (see also \cite[\S 4]{Laz}), based in this setting on an adjunction-type formula that involves the nearby cycle functor and mixed Hodge modules. This is presented in Section 5.\par
At the same time as this proof was completed, J. Suh proved a Nakano type vanishing for the Deligne canonical extension of a polarizable variation of Hodge structure in \cite{Suh}, which in particular implies the same Kawamata-Viehweg type result. (Note that Suh also proves other types of vanishing statements as well, that apply to all graded quotients in the Hodge filtration of the de Rham complex.) His idea is based on the Esnault-Viehweg \cite{EV} approach to vanishing theorems. Following J. Suh's proof and C. Schnell's Esnault-Viehweg type proof of Theorem \ref{KST} in \cite{Sch14a}, I extend this to a version of the injectivity theorem of Koll$\acute{\text a}$r and Esnault-Viehweg for the Deligne canonical extensions of certain polarizable variations of Hodge structures. This is presented as a surjectivity statement below and proved in Section 6.
\begin{theorem} \label{main}
Let $X$ be a smooth projective variety with a line bundle $\LL$, $D$ a reduced simple normal crossings divisor, and $\VV=(\vv, F^\bullet, \VV_\QQ)$ a variation of Hodge structure defined on $U=X\setminus D$. Assume  \[\LL^N=\mathcal{O}_X(D')\]
for some $N\gg0$ and an effective divisor $D'$ supported on $D$. If $E$ is an effective divisor supported on ${\rm Supp}(D')$. Then for all $i$, the natural map induced by $E$
\[H^i(X, {\rm Gr}^{\textup{first}}_F {\rm DR}_{(X, D)}(\tilde{\mathcal V})\otimes \LL^{-1}(-E))\longrightarrow H^i(X, {\rm Gr}^{\textup{first}}_F {\rm DR}_{(X, D)}(\tilde{\mathcal V})\otimes \LL^{-1})\] 
is surjective, where $\tilde\vv$ is the Deligne canonical extension of $\vv$, and ${\rm Gr}^{\textup{first}}_F {\rm DR}_{(X, D)}$ is the first non-zero graded piece of the logarithmic de Rham complex for $\tilde\vv$ (see Section 5).
\end{theorem}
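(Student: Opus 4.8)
The plan is to follow the Esnault--Viehweg method as adapted by Suh and Schnell: pass to a cyclic cover in order to turn the twist by the non-positive bundle $\LL^{-1}$ into an eigen-summand of an honest logarithmic de Rham complex of a polarizable variation of Hodge structure, and then read off the surjectivity from the strictness of the Hodge filtration ($E_1$-degeneration) upstairs. Throughout I would use that ${\rm Gr}^{\textup{first}}_F{\rm DR}_{(X,D)}(\tilde\vv)$ is, up to a shift, a single locally free sheaf $\mathcal S$ on $X$: at the extreme step of the Hodge filtration the Griffiths-transversal differential lands in a zero graded piece, so the graded complex is concentrated in one degree. Hence the assertion is the surjectivity of $H^i(X,\mathcal S\otimes\LL^{-1}(-E))\to H^i(X,\mathcal S\otimes\LL^{-1})$, a statement about the cohomology of a single sheaf. (Equivalently, by Serre duality it is the injectivity, given by multiplication by the defining section of $\oo_X(E)$, of the dual map $H^{n-i}(X,\mathcal S^\vee\otimes\omega_X\otimes\LL)\to H^{n-i}(X,\mathcal S^\vee\otimes\omega_X\otimes\LL(E))$, which is exactly a Koll\'ar--Esnault--Viehweg injectivity statement; but I will argue the surjectivity directly.)

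First I would construct the cover. From $\LL^N\cong\oo_X(D')$ and the tautological section cutting out $D'$, form the degree-$N$ cyclic cover $\pi\colon Y\to X$ branched along ${\rm Supp}(D')\subseteq D$; after normalizing and taking a log resolution I may assume $Y$ smooth, that $D_Y:=(\pi^{-1}D)_{\rm red}$ is simple normal crossings, and that $\mu_N$ acts. The normalized pushforward decomposes into eigen-line-bundles, $\pi_*\oo_Y=\bigoplus_{0\le j<N}\LL^{-j}(\lfloor jD'/N\rfloor)$, and since $N\gg0$ forces $\lfloor D'/N\rfloor=0$, the character-$1$ eigenpiece is exactly $\LL^{-1}$. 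The hypothesis ${\rm Supp}(E)\subseteq{\rm Supp}(D')$ is what lets the subsheaf $\LL^{-1}(-E)$ be realized as the neighbouring integral extension obtained by changing the rounding along the branch components; the map induced by $E$ is then the $\mu_N$-equivariant inclusion of one such extension into the other.

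Next I would transport the Hodge data and run the Hodge-theoretic engine. Pulling $\VV$ back to $U_Y=Y\setminus D_Y$ gives a polarizable variation of Hodge structure $\VV_Y$, and, using the quasi-unipotence of the monodromy and the residue/eigenvalue description of the Deligne canonical extension, I would identify $\pi^*\tilde\vv$ with $\widetilde{\VV_Y}$ up to the ramification correction, compatibly with Hodge filtrations. Combining this with the eigen-decomposition of $\pi_*$, the two sheaves $\mathcal S\otimes\LL^{-1}$ and $\mathcal S\otimes\LL^{-1}(-E)$ appear as the character-$1$ eigen-summands, on the level of the first graded piece, of $\pi_*{\rm Gr}^{\textup{first}}_F{\rm DR}_{(Y,D_Y)}(\widetilde{\VV_Y})$ for the two roundings, and the $E$-twist map is the corresponding eigenpiece of a morphism of logarithmic de Rham complexes on $Y$. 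Since the log complex of $\widetilde{\VV_Y}$ is the de Rham complex of a polarizable Hodge module on $Y$, Saito's theory gives $E_1$-degeneration of the Hodge-to-de Rham spectral sequence; equivalently the Hodge filtration on the hypercohomology of the full log complex is strict and ${\rm Gr}^p_F$ of that hypercohomology equals the hypercohomology of ${\rm Gr}^p_F{\rm DR}$. This is exactly the strictness underlying Schnell's proof of Theorem \ref{KST}, so it may be quoted. Because ${\rm Gr}^{\textup{first}}_F{\rm DR}$ is the single-sheaf piece sitting at the extreme of the Hodge filtration, strictness of the filtered morphism induced by the $E$-twist specializes on this graded piece to the asserted surjection on each $H^i$; taking the character-$1$ eigenpiece descends the statement to $X$.

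\textbf{Main obstacle.} The crux is the transport step: making the Deligne canonical extension and its Hodge filtration interact correctly with the ramified cover, so that the integral sheaves $\LL^{-1}$ and $\LL^{-1}(-E)$ match the correct roundings $\lfloor jD'/N\rfloor$ and $\lceil jD'/N\rceil$ of the fractional divisor on the nose. This is the delicate residue and eigenvalue computation of Esnault--Viehweg, and it is precisely where the hypotheses $N\gg0$ and ${\rm Supp}(E)\subseteq{\rm Supp}(D')$ are consumed; the quotient singularities introduced by the normalized cover must also be resolved compatibly with the Hodge filtration. By contrast the $E_1$-degeneration, though deep, is available off the shelf from Saito's theory.
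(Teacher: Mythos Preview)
Your overall architecture---cyclic cover plus $E_1$-degeneration, in the Esnault--Viehweg/Suh/Schnell mold---is the same as the paper's, but the mechanism you propose for extracting the surjectivity is not correct, and the missing ingredient is exactly the heart of the argument.

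You write that ``strictness of the filtered morphism induced by the $E$-twist specializes on this graded piece to the asserted surjection.'' Strictness (or $E_1$-degeneration) by itself does not manufacture surjectivity on a graded piece; it only identifies ${\rm Gr}^F$ of the hypercohomology with the hypercohomology of ${\rm Gr}^F$. What the paper actually does is a commutative-square argument on $X$ (not on $Y$). First, the cyclic cover is used only to certify that $(\tilde\vv(*D)\otimes\LL^{-1}(*D),F_\bullet)$ is a direct summand of the filtered $\dd$-module underlying a mixed Hodge module, so that the spectral sequence for ${\rm DR}_{(X,D)}(\tilde\vv\otimes\LL^{-1})$ degenerates at $E_1$; this gives surjectivity of the edge map $H^i\big(X,{\rm DR}_{(X,D)}(\tilde\vv\otimes\LL^{-1})\big)\twoheadrightarrow H^i\big(X,{\rm Gr}^F_{q(M)}{\rm DR}_{(X,D)}(\tilde\vv)\otimes\LL^{-1}\big)$. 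Second---and this is the step your proposal lacks---the \emph{unfiltered} inclusion
\[
{\rm DR}_{(X,D)}(\tilde\vv\otimes\LL^{-1}(-E))\longrightarrow{\rm DR}_{(X,D)}(\tilde\vv\otimes\LL^{-1})
\]
is a quasi-isomorphism by \cite[Lemma~2.10]{EV}, applied component by component: because $N\gg0$ the residues of $\tilde\vv\otimes\LL^{-1}$ along each component of ${\rm Supp}(D')$ are strictly positive, and since ${\rm Supp}(E)\subset{\rm Supp}(D')$ one may peel off $-D_i$'s without changing the complex up to quasi-isomorphism. Chasing the resulting square yields the surjection on ${\rm Gr}^F_{q(M)}$. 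Note in particular that the spectral sequence for the $\LL^{-1}(-E)$-twisted complex is \emph{not} asserted to degenerate; the paper explicitly flags this.

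Two further points. Your ``two roundings'' picture would at best produce $E=(D')_{\rm red}$, not an arbitrary effective $E$ supported on ${\rm Supp}(D')$, so it cannot replace the residue lemma above. And working upstairs on a resolved $Y$ is unnecessary and creates extra difficulties (compatibility of $\pi^*\tilde\vv$ with $\widetilde{\VV_Y}$ after a log resolution is not automatic); the paper never leaves $X$ except to invoke the eigen-decomposition of $\pi'_*\oo_{U'}$ in order to recognize $\tilde\vv\otimes\LL^{-1}$ as a Hodge-theoretic object. Your ``main obstacle'' paragraph correctly senses that the residue/eigenvalue bookkeeping is the crux, but its role is to feed \cite[Lemma~2.10]{EV}, not to match roundings on $Y$.
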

A injectivity theorem for pure Hodge modules with strict support $X$ follows from it.
\begin{theorem} \label{injectivity}
Let X be a complex projective variety, $E$ an effective divisor, and $M$ a polarizable pure Hodge module with strict support $X$. If a line bundle $\LL$ is either nef and big or semi-ample and satisfying $\text{H}^0(X, \LL^m(-E))\neq0$ for some $m>0$,  then the natural map
\[H^i(X, S(M)\otimes \LL)\longrightarrow H^i(X, S(M)\otimes \LL(E))\]
is injective for all $i$.
\end{theorem}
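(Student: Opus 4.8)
The plan is to derive the injectivity statement from the surjectivity Theorem~\ref{main} by Serre duality, after reducing to the normal crossings situation in which that theorem lives. Since $M$ has strict support $X$, it is the minimal extension of a polarizable variation of Hodge structure $\VV$ defined over a dense open subset $U$. First I would choose a log resolution $\mu\colon X'\to X$ so that $U$ becomes the complement of a reduced simple normal crossings divisor $D$ and so that every divisor introduced below is a component of $D$. Let $M'$ be the pure Hodge module with strict support $X'$ extending $\VV$. The technical input here is Saito's Koll\'ar-type package for the lowest graded piece, namely $\mu_*S(M')=S(M)$ together with $R^j\mu_*S(M')=0$ for $j>0$. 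Combined with the projection formula this gives, for any line bundle $\mathcal A$ on $X$,
\[
H^i(X, S(M)\otimes\mathcal A)\;\cong\;H^i\bigl(X', S(M')\otimes\mu^*\mathcal A\bigr),
\]
compatibly with the map induced by $E$ (pulled back to $\mu^*E$). Hence the injectivity on $X$ is equivalent to that of
\[
H^i\bigl(X', S(M')\otimes\mu^*\LL\bigr)\longrightarrow H^i\bigl(X', S(M')\otimes\mu^*\LL(\mu^*E)\bigr),
\]
and the hypotheses on $\LL$ (nef and big, or semi-ample with $H^0(\LL^m(-E))\neq0$) pull back to the same hypotheses for $\mu^*\LL$. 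From now on I work on the smooth projective $X'$, which I rename $X$.

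On the smooth model $S(M)$ is locally free: up to a twist by $\omega_X$ it is a Hodge bundle of the Deligne canonical extension $\tilde\vv$, hence locally free by Schmid's nilpotent orbit theorem. Serre duality then turns the desired injectivity of $H^i(X, S(M)\otimes\LL)\to H^i(X, S(M)\otimes\LL(E))$ into the surjectivity, with $n=\dim X$, of
\[
H^{n-i}\bigl(X, S(M)^\vee\otimes\omega_X\otimes\LL^{-1}(-E)\bigr)\longrightarrow H^{n-i}\bigl(X, S(M)^\vee\otimes\omega_X\otimes\LL^{-1}\bigr),
\]
the arrow again being the one induced by the section cutting out $E$. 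The second ingredient is the identification
\[
S(M)^\vee\otimes\omega_X\;\cong\;{\rm Gr}^{\textup{first}}_F{\rm DR}_{(X,D)}(\tilde\vv),
\]
which I would obtain from Saito's duality for polarizable Hodge modules: dualizing $M$ replaces $\VV$ by $\VV^\vee$ and sends the lowest graded piece to the first graded piece of the logarithmic de Rham complex of $\VV^\vee$, while the polarization identifies $\VV^\vee$ with a Tate twist of $\VV$, leaving the extremal graded piece unchanged. In the trivial case $M=\QQ_X^H$ this reads $\omega_X^\vee\otimes\omega_X=\oo_X={\rm Gr}^{\textup{first}}_F{\rm DR}_X(\oo_X)$, consistent with $S(\QQ_X^H)=\omega_X$. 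Granting the identification, the displayed surjectivity is exactly the conclusion of Theorem~\ref{main} for $\VV$ in cohomological degree $n-i$, provided the divisorial hypotheses of that theorem can be met; since $i$ and $n-i$ range over all integers together, proving this for every degree yields injectivity in every degree.

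It remains to manufacture an equality $\LL^N=\oo_X(D')$ with $D'$ effective and supported on $D$, and with $E$ supported on ${\rm Supp}(D')$. In the semi-ample case a nonzero section of $\LL^m(-E)$ gives $\LL^m=\oo_X(E+R)$ with $R$ effective; setting $D'=E+R$ and enlarging $D$ by a further blow-up to a simple normal crossings divisor containing ${\rm Supp}(D')$ and the polar locus of $\VV$ does the job, since $E\le D'$. In the nef and big case Kodaira's lemma gives $\LL^{N_0}=\oo_X(H+G)$ with $H$ ample and $G$ effective; for $k\gg0$ the class $kH-E$ is ample, hence effective, so $\oo_X(kH)=\oo_X(C+E)$ with $C$ effective, whence $\LL^{N_0k}=\oo_X(C+E+kG)=\oo_X(D')$ with $E\le D'$. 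Absorbing ${\rm Supp}(D')$ and the polar locus into $D$ changes neither $\tilde\vv$, nor ${\rm Gr}^{\textup{first}}_F{\rm DR}_{(X,D)}(\tilde\vv)$, nor the relevant cohomology, because $\VV$ is unramified along the new components and the first graded piece is a single Hodge bundle insensitive to the logarithmic structure there. Applying Theorem~\ref{main} to $\VV$ then gives the required surjectivity.

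The step I expect to be the main obstacle is the identification in the second paragraph: matching Saito's abstract lowest graded piece $S(M)$ with the concrete first graded piece ${\rm Gr}^{\textup{first}}_F{\rm DR}_{(X,D)}(\tilde\vv)$ under Serre duality, keeping exact track of the $\omega_X$-twist and of the logarithmic poles along $D$ so that no spurious boundary contribution appears. The reduction of the first paragraph rests on Saito's Koll\'ar package, which is deep but citable, and the divisor bookkeeping of the third paragraph is routine positivity; by contrast, the duality bridge is where the pure-Hodge-module input of Theorem~\ref{injectivity} must be made to coincide \emph{precisely} with the variation-of-Hodge-structure input of Theorem~\ref{main}.
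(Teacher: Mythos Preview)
Your overall architecture matches the paper's: reduce to the smooth simple-normal-crossings situation using Corollary~\ref{hdim} (the Koll\'ar package $R\mu_*S(M')=S(M)$), then dualize to land in Theorem~\ref{main}. The paper packages your second-paragraph duality via the object $Q_X(M):=\mathbb D(S(M^*))$ of \eqref{Q(M)} together with the isomorphism \eqref{dualiso}, which identifies it with ${\rm Gr}^F_{q(M)}{\rm DR}_{(X,D)}(\tilde\vv)$; passing from $M$ to $M^*$ (rather than invoking the polarization on $\VV$ directly) is what keeps the bookkeeping clean. So the step you flag as the main obstacle is in fact the one that is essentially citable.

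The genuine gap is in what you dismiss as ``routine positivity''. Theorem~\ref{main} does not merely require $\LL^N=\oo_X(D')$ for \emph{some} $N$; it requires the coefficients $\alpha_i$ of $D'$ to be small relative to $N$ (this is the ``$N\gg0$'' in the statement, made precise in Remark~\ref{re1} as $\alpha_i/N+\lambda_{D_i}<1$, where $\lambda_{D_i}$ is the largest residue eigenvalue of $\tilde\vv$ along $D_i$). Your semi-ample argument produces only $\LL^m=\oo_X(E+R)$ with $m$ fixed and no control whatsoever on the ratios $\alpha_i/m$; in particular you never use semi-ampleness, and without it the hypothesis of Theorem~\ref{main} is simply not met. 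The paper uses semi-ampleness essentially: since some $\LL^u$ is base-point-free, one adjoins a general (hence reduced) member $D_1\in|\LL_1^u|$ transverse to everything and writes $\LL_1^{u+v}=\oo(D_1+E_1+C_1)$; letting $u$ grow drives every ratio to zero. The same issue bites your nef-and-big argument: in $\LL^{N_0k}=\oo_X(C+E+kG)$ the component $kG$ contributes the \emph{fixed} ratio $(\text{coeff of }G)/N_0$, which need not satisfy the bound along components where $\lambda_{D_i}>0$. The paper repairs this by using nefness of $\LL$ to absorb high powers into the ample part (replacing the ample divisor by $\LL^{N-v}(A)$ with $N$ large before choosing a general member), which is exactly the move your sketch omits.
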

Replacing $E$ by a sufficiently large multiple of an ample divisor, Theorem \ref{kvt1} follows by Serre vanishing, which provides the second proof. See also Corollary \ref{kv}. We also obtain a version of Kawamata-Viehweg vanishing for $\QQ$-divisors which contains the original version for $\omega_X$, as explained in Remark \ref{qdiv}.  \par
It is worth mentioning that, just like in Theorem \ref{KST}, the statement given here works on arbitrary (not necessary smooth) projective varieties $X$, due to the theory of Hodge modules on singular spaces; see Section 3.\par
In Sections 2, 3 and 4, we briefly review $\dd$-modules and Hodge modules. Some necessary theorems are presented for later use. In Section 5, I present the inductive proof of the Kawamata-Viehweg vanishing for pure Hodge modules using nearby cycles. Section 6 is dedicated to the proof of the injectivity theorem in the normal crossings case (Theorem \ref{main}). Section 7 deals with applications. 
\subsection*{Acknowledgements}   
I would like to thank my advisor, Mihnea Popa, for suggesting the problem and for many helpful discussions. I am also grateful to Christian Schnell for answering my questions. Finally, I thank Junecue Suh for correspondence and for sharing his preprint \cite{Suh}, which helped crucially with the second part of this work.
\section{$\dd$-modules and the de Rham complex}
In this section, I will recall some terminologies of $\dd$-modules and the Riemann-Hilbert correspondence between regular holonomic $\dd$-modules and perverse sheaves, which will be essentially used in the theory of Hodge modules. For a much detailed exposition of $\dd$-modules, we refer to \cite{HTT} and \cite{MS}. The former focuses more on algebraic $\dd$-modules, and the latter is mainly about the analytic story. 
\subsection{The side-change operator of $\dd$-modules}
Let $X$ be a complex manifold and let $\dd_X$ be the sheaf of differential operators. It is well-known that the category of left $\dd$-modules and the category of right $\dd$-modules are equivalent via the so called side-change operation,
\[\nn\longrightarrow\mm=\nn\otimes\omega_X, \] and its quasi-inverse
\[\mm\longrightarrow\nn=\mathcal{H}om_{\oo_X}(\omega_X, \mm),\]
where $\omega_X$ is the canonical sheaf.
When $\dd$-modules are filtered, the correspondence of filtrations under the equivalence is 
\[F_p(\mm)=F_{p+n}(\nn)\otimes\omega_X, \ \text{and}\  F_p(\nn)=F_{p-n}(\mm)\otimes\omega^{-1}_X,\] where $n=\text{dim}X$.
\subsection{The de Rham functor and the Riemann-Hilbert correspondence} The de Rham functor is defined to be
\[{\rm DR}(\nn):=[\nn\longrightarrow\Omega^1_X\otimes\nn\longrightarrow\cdot\cdot\cdot\longrightarrow\Omega^n_X\otimes\nn][n],\]
for left $\dd$-modules, where $\Omega^k_X$ is the sheaf of holomorphic $k$-forms on $X$. Also
\[{\rm DR}(\mm):=[\bigwedge^n\scrt_X\otimes\mm\longrightarrow\bigwedge^{n-1}\scrt_X\otimes\mm\longrightarrow\cdot\cdot\cdot\longrightarrow\mm][n],\]
for right $\dd$-modules, where $\scrt_X$ is the sheaf of vector fields on $X$. Both of the above complexes are concentrated in degree $-n,...,-1,0$. ${\rm DR}(\nn)$ (${\rm DR}(\mm$) respectively) is called the de Rham complex of $\nn$ ($\mm$ respectively). Clearly the de Rham functor is compatible with the side-change operation, i.e.
${\rm DR}(\nn)$ is canonically isomorphic to ${\rm DR}(\mm)$ provided $\mm=\nn\otimes \omega_X$, because of the canonical isomorphism
\[\bigwedge^{n-k}\scrt_X\otimes\omega_X\simeq\Omega^k.\] 
\par If the right $\dd$-module $\mm$ is filtered, then ${\rm DR}(\mm)$ is filtered naturally by 
\[F_p{\rm DR}(\mm):=[\bigwedge^n\scrt_X\otimes F_{p-n}\mm\longrightarrow\bigwedge^{n-1}\scrt_X\otimes F_{p-n+1}\mm\longrightarrow\cdot\cdot\cdot\longrightarrow F_p\mm][n].\]
Similarly, for filtered left $\dd$-module $\nn$,
\[F_p{\rm DR}(\nn):=[F_p\nn\longrightarrow\Omega^1_X\otimes F_{p+1}\nn\longrightarrow\cdot\cdot\cdot\longrightarrow\Omega^n_X\otimes F_{p+n}\nn][n].\]
Obviously, the filtered de Rham functor is also compatible with the side-change operation. Hence, it is not necessary to distinguish the right version and the left version of the (filtered) de Rham functor and (filtered) de Rham complexes.
The associated graded complexes for the filtration above are 
\[{\rm Gr}^F_p{\rm DR}(\mm):=[\bigwedge^n\scrt_X\otimes {\rm Gr}^F_{p-n}\mm\longrightarrow\bigwedge^{n-1}\scrt_X\otimes {\rm Gr}^F_{p-n+1}\mm\longrightarrow\cdot\cdot\cdot\longrightarrow {\rm Gr}^F_p\mm][n],\]which are complexes of $\oo_X$-modules.

\par The Riemann-Hilbert correspondence says that the de Rham functor induces an equivalence between the category of regular holonomic $\dd$-modules and the category of perverse sheaves. In particular, holomorphic vector bundles with flat connections correspond to local systems under this equivalence. This is just part of the general Riemann-Hilbert correspondence. See for instance \cite[\S 7 ]{HTT} for the whole statement. \par
Convention: We denote increasing filtrations (decreasing filtrations respectively) by $\fdot$ ($F^{\bullet}$ respectively), and the associated graded objects by ${\rm Gr}^F$(${\rm Gr}_F$ respectively).
\section{Hodge modules}
This section will be devoted to briefly recall Morihiko Saito's theory of Hodge modules. I will only mention basic information of Hodge modules and important theorems that will be needed later on. The main two references are Saito's original paper \cite{Sai88} and \cite{Sai90}. Another useful reference is the recent survey \cite{Sch14}. Since Hodge modules are originally defined for complex manifolds, when we say Hodge modules on some complex algebraic variety, it means on the underlying analytic space. Since we are about to work on projective varieties, all the ingredients of Hodge modules will be algebraic by the GAGA principle. Varieties always mean reduced and irreducible complex algebraic schemes or analytic schemes in this paper.
\subsection{Pure Hodge modules}
Let $X$ be a smooth complex variety or complex manifold.  A variation of Hodge structure $\mathbb V$ of weight $l$ is
\[\mathbb V=(\mathcal V, F^{\bullet}, \mathbb V_{\QQ}),\]
consisting of 
\begin{itemize}
\item a $\QQ$-local system $\mathbb V_{\QQ}$;
\item a finite decreasing filtration $F^{\bullet}$ of the vector bundle $\mathcal V=\mathbb V_{\QQ}\otimes\oo_X$ by subbundles,
satisfying 
\item $\forall x\in X$, $\mathbb{V}_x=(\mathcal{V}_x, F^{\bullet}_x, \mathbb{V}_{\QQ, x})$ is a  Hodge structure of weight $l$;
\item the filtration $F^{\bullet}$ satisfies the Griffiths transversality condition, namely for the induced connection $\nabla$,
\[\nabla(F^p)\subset \Omega^1\otimes F^{p-1}.\]
\end{itemize}
Additionally, a polarization of a variation Hodge structure $\mathbb V$ of weight $l$ is a morphism 
\[Q: \mathbb V_{\QQ}\otimes \mathbb V_{\QQ}\longrightarrow\QQ(-l),\] 
such that $Q$ induces a polarization of $\mathbb V_x$ for every $x\in X$, where $\QQ(-l)=(2\pi i)^{-l}\QQ$\\\\
Note: If we let $F_p\mathcal V= F^{-p}\mathcal V$, then the last requirement means exactly that the new filtration $F_{\bullet}$ is good for left $\dd$-module $\mathcal V$. From now on, $\fdot\mathcal V$ of a variation of Hodge structure always means this induced increasing filtration.\par
Saito's theory generalizes variations of Hodge structure by allowing filtered holonomic $\dd$-modules with $\QQ$-structure instead.
Here a filtered holonomic $\dd$-modules with $\QQ$-structure is a triple 
\[M=(\mm, \fdot\mm, K)\]
where $(\mm, \fdot\mm)$ is a holonomic right $\dd$-module with a good filtration, and $K$ is a $\QQ$-perverse sheaf, satisfying 
\[{\rm DR}(\mm)\simeq \CC\otimes_{\QQ} K.\]
We use right $\dd$-modules because it is more natural to define the direct image funtor for right $\dd$-modules. See Section \ref{directimage}. From now on, all $\dd$-modules will mean right $\dd$-modules unless stated explicitly. I will also use the expression "a filtered left $\dd$-module underlying a Hodge module (pure or mixed)" if it is so after side-change. \par
Saito constructed an abelian category $\text{HM}(X,l)^p$ of polarizable pure Hodge modules of weight $l$ for smooth complex variety $X$ (or complex manifold) in \cite{Sai88} which is a fully faithful subcategory of category of filtered holonomic $\dd$-modules with $\QQ$-structures. (M. Saito constructed pure Hodge modules first, but we are only interested in polarizable ones.) To be more precise, $\text{HM}(X,l)^p$ is semi-simple, i.e.
\[\text{HM}(X, l)^p=\bigoplus_{Z\subseteq X} \text{HM}^Z(X, l)^p,\]
direct sum over all irreducible subvariety of X. Here $\text{HM}^Z(X, l)^p$ is of objects extended from polarizable variations of Hodge structures of weight $l-\text{dim}Z$ on Zariski open subset of the smooth locus of $Z$. From this, it is clear that when $X$ is a point, $\textup{HM}(X, l)^p$ is just the category of polarizable Hodge structures of weight $l$. \par
If $Y$ is an irreducible subvariety of $X$, Hodge modules on $Y$ can be defined as,
\begin{definition}[Pure Hodge Modules on Singular Spaces] \label{singhm}
\[\textup{HM}(Y, l)^p:=\textup{HM}_Y(X, l)^p,\]
where the right-hand side is the category of polarizable pure Hodge modules of weight $l$ on $X$ supported on $Y$. 
\end{definition}
In this way, the underlying $\dd$-module of $M\in\textup{HM}(Y, l)^p$ makes sense. So do ${\rm DR}(\mm)$, $\fdot {\rm DR}(M)$ and ${\rm Gr}^F_{\bullet}{\rm DR}(\mm)$. In fact, by the filtered version of Kashiwara's equivalence, $\textup{HM}(Y, l)^p$ doesn't depend on embeddings of $Y$ into ambient space. Moreover, ${\rm Gr}^F_k{\rm DR}(\mm)$ are well-defined complexes of coherent sheaves on $Y$, independent of the choice of an embedding. Namely, $\textup{HM}(Y, l)^p$ and ${\rm Gr}^F_k{\rm DR}(\mm)$ are intrinsically defined for $Y$. See \cite[\S14]{Sch14} for details. Indeed, only local embeddings are needed. Then gluing local data together 
gives rise to the definition of $\textup{HM}(Y, l)^p$ \cite{Sai88}. 
\par 
A polarizable pure Hodge module $M$ of weight $l$ is of strict support $Z$, or strictly supported on $Z$, if $M\in \text{HM}^Z(X, l)^p$. Indeed, it is called strict because its underlying perverse sheaf is the intersection complex of the local system. Saito also proved the following equivalence, which will play an important role later on.
\begin{theorem}[{\cite[Theorem 3.21]{Sai90}}] \label{geneq}
Let $X$ be a complex variety of dimension $n$. Then the following two categories are equivalent:
\[\textup{HM}^X(X, l)^p\simeq\textup{VHS} ^p_{gen}(X, l-n),\]
where $\textup{VHS}^p_{gen}(X, l-n)$ is the inductive limit of $\textup{VHS}^p(U, l-n)$ the categories of polarizable variations of Hodge structures of weight $l-n$ on smooth dense Zariski open subsets $U$.  
\end{theorem}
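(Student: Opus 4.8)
The plan is to exhibit the equivalence as a pair of mutually inverse functors: a \emph{generic restriction} functor $r\colon \textup{HM}^X(X,l)^p \to \textup{VHS}^p_{gen}(X,l-n)$ and an \emph{intermediate extension} functor in the opposite direction. First I would build $r$. Given $M=(\mm,\fdot\mm,K)\in\textup{HM}^X(X,l)^p$, the strict-support hypothesis means that $K$ is the intersection complex of a local system, so there is a dense smooth Zariski open $U\subseteq X$ over which $K$ restricts to $\VV_\QQ[n]$ for a local system $\VV_\QQ$ and $\mm$ restricts to a vector bundle $\vv$ with flat connection. On such a $U$ the axioms defining a Hodge module force $(\fdot\mm, K)$ to assemble into a polarizable variation of Hodge structure, the weight dropping by $n=\dim X$ because of the shift $[n]$ built into the de Rham functor. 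Passing to the inductive limit over all admissible $U$ yields $r$, and since polarizations restrict to polarizations the target category is the correct one.

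Second, I would establish full faithfulness. The key structural input is that an object of $\textup{HM}^X(X,l)^p$ has no nonzero subobject or quotient object supported on a proper closed subvariety; this is precisely the strict-support condition together with the semisimple decomposition by support recalled above. Consequently, if $f\colon M\to N$ is a morphism in $\textup{HM}^X(X,l)^p$ whose generic restriction $r(f)$ vanishes, then $\operatorname{im}(f)$ is a subquotient supported on the proper closed set $X\setminus U$, hence zero; this gives injectivity of $r$ on $\operatorname{Hom}$ groups. Surjectivity on $\operatorname{Hom}$'s then follows from the functoriality of intermediate extension: a morphism of variations over a common dense open induces a morphism of their extensions, and one checks the two constructions agree generically.

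The heart of the argument, and the step I expect to be the main obstacle, is essential surjectivity: every polarizable variation $\VV$ of weight $l-n$ on some dense open $U$ must arise as $r(M)$ for a (necessarily unique) $M$ with strict support $X$. Here $M$ is constructed as the intermediate extension of $\VV$ across $D=X\setminus U$: the underlying perverse sheaf is the intersection complex of $\VV_\QQ$, while the filtered $\dd$-module is produced by Saito's inductive machinery, extending $(\mm,\fdot\mm)$ one stratum at a time by means of the Kashiwara--Malgrange $V$-filtration along a local defining equation $t$ and the nearby- and vanishing-cycle functors $\psi_t$, $\phi_t$. The difficulty is that one must verify the entire list of conditions defining a pure Hodge module --- that $\psi_t M$ and the unipotent part of $\phi_t M$ again underlie Hodge modules supported on lower-dimensional strata, that the canonical and variation maps are strict and filtration-compatible, and that the relative monodromy weight filtrations exist --- and then produce a polarization. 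Controlling the behavior of $\fdot\mm$ near the boundary and constructing the polarization is exactly where the deep asymptotic analysis of polarized variations (Schmid's nilpotent orbit theorem, the $\mathrm{SL}_2$-orbit theorem, and the Cattani--Kaplan--Schmid norm estimates) enters; this is the induction-on-dimension core of Saito's theory, and carrying it out is what makes the statement substantial. Finally, uniqueness of the strict-support extension, combined with full faithfulness, identifies $r$ and intermediate extension as mutually inverse equivalences.
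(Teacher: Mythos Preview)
The paper does not prove this theorem at all: it is stated as a citation of Saito's result \cite[Theorem 3.21]{Sai90} in the background Section~3, with no argument supplied. There is therefore no ``paper's own proof'' to compare your proposal against. Your outline --- generic restriction to a dense open, full faithfulness from the strict-support/no-sub-or-quotient condition, and essential surjectivity via Saito's inductive construction of the intermediate extension using $V$-filtrations, nearby/vanishing cycles, and the Schmid--Cattani--Kaplan--Schmid asymptotic analysis --- is a reasonable high-level sketch of how the equivalence is actually established in \cite{Sai88, Sai90}, but in the context of the present paper the theorem is simply quoted as an input, not re-proved.
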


\subsection{Mixed Hodge modules}
In \cite{Sai90}, Saito constructed another abelian category, the category of mixed Hodge modules. Similar to mixed Hodge structures generalizing pure Hodge structures, the basic input of a mixed Hodge module on a smooth variety $X$ is a filtered holonomic $\dd$-module with $\QQ$-structure, $(\mm, \fdot\mm, K)$ plus a finite increasing filtration $W_{\bullet}$ on $K$, which in turn induces a finite filtration $W_{\bullet}\mm$ for $\mm$ by the Riemann-Hilbert correspondence. This $W$-filtration is called the weight filtration, while the $F$-filtration is called the Hodge filtration.
\par Saito first constructed the category of weakly mixed Hodge modules (graded polarizable), $\text{MHW}(X)^p$. An $M=(\mm, \fdot\mm, K, W_{\bullet})$ is a weakly mixed Hodge module (graded polarizable) if \[ ({\rm Gr}^W_l\mm, \fdot, {\rm Gr}^W_lK)\in\text{HM}(X, l)^p,\] 
where $\fdot$ is the induced filtration. 
If in addition, such an $M$ also satisfies some "admissibility" condition (originally defined in \cite{SZ85} for admissible variations of mixed Hodge structures), then $M$ is a mixed Hodge module. See \cite[\S20]{Sch14} or originally \cite[\S2]{Sai90} for a detailed discussion of the definition of mixed Hodge modules. \par
Denote the category of graded polarizable mixed Hodge modules by $\text{MHM}(X)^p$. Mixed Hodge modules have many good properties, one of which is that $\text{MHM}(X)^p$ is stable by $j_*j^{-1}$ (see \cite[Proposition 2.11]{Sai90}), where $j$ is the open embedding
\[j:X\setminus D\longrightarrow X\] 
for some effective divisor $D$. This is called localizations of mixed Hodge modules in \cite{Pop}. At the level of perverse sheaves, this functor is precisely 
\[Rj_*j^{-1}(K).\]
If $M\in \text{MHM}(X)^p$, it is hard to compute $j_*j^{-1}M$ explicitly in general, even for $j_*j^{-1}\QQ_X^H[n]$. But in some special situation like for instance $D$ is a smooth irreducible divisor, $j_*j^{-1}\QQ_X^H[n]$ is understood very well (\cite[\S 5]{Pop}). I will calculate some special examples of mixed Hodge modules of this type in the next section.   
\subsection{Direct image functor and the Stability Theorem}\label{directimage}
Let $f: X\longrightarrow Y$ be a projective morphism of smooth complex varieties (or complex manifolds), and let $M$ be a Hodge module (pure or mixed) on $X$. The direct image of $M$ is \[f_*M:=(f_+(\mm, \fdot\mm), Rf_*K),\]
i.e. combining the direct image of the filtered $\dd$-module and the direct image of the perverse sheaf.
Detailed discussion about direct image functors can be found in \cite[\S26, \S28]{Sch14}. One of the main results of M. Saito's theory is the following theorem about the direct image functor.
\begin{theorem}[Stability Theorem and Decomposition Theorem \cite{Sai88}] \label{stability}
Let $f:X\longrightarrow Y$ is a projective morphism between complex manifolds, and let \[M=(\mm, \fdot\mm, K)\in\textup{HM}^Z(X, l)^p,\] for some subvariety Z of $X$. Then,\\
(1) $f_+(\mm, \fdot\mm)$ is strict, and $\mathcal H^if_*M\in\textup{HM}(Y, n+i)^p$;\\
(2) $f_*M\simeq \bigoplus\mathcal H^if_*M[-i]$ (non-canonical). 
\end{theorem}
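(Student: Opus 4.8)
The plan is to reproduce Saito's original argument, whose defining feature is that the four conclusions---strictness of $f_+(\mm,\fdot\mm)$, purity of the cohomologies, relative hard Lefschetz, and the decomposition---are not proved separately but propagated together through a single induction on $\dim\operatorname{Supp} M$. The first step is a reduction that disposes of closed immersions. Since $f$ is projective it factors as $X\hookrightarrow \mathbb{P}^N\times Y\xrightarrow{\,p\,}Y$ with a closed immersion $\iota$ followed by the projection $p$. For $\iota$ the filtered Kashiwara equivalence shows that $\iota_+(\mm,\fdot\mm)$ is exact and strict, that $\mathcal H^i\iota_* M=0$ for $i\neq0$, and that $\mathcal H^0\iota_* M$ is again a polarizable pure Hodge module of weight $l$ with strict support $\iota(Z)$; here part (2) is vacuous. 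Because $\mathcal H^i f_* M=\mathcal H^i p_*(\iota_* M)$, everything then reduces to the smooth projection $p$ equipped with the relatively ample class $\ell=c_1(\oo(1))$.

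The heart of the matter for the projection is relative hard Lefschetz: once each $\mathcal H^i p_* M$ is known to be polarizable and pure of weight $l+i$, I would prove that cup product with $\ell$ induces isomorphisms
\[\ell^i:\ \mathcal H^{-i}p_* M \xrightarrow{\ \sim\ } \mathcal H^{i}p_* M(i),\qquad i\geq0,\]
in the category of pure Hodge modules. Granting this, statement (2) follows formally from Deligne's splitting criterion for an object carrying a Lefschetz $\mathfrak{sl}_2$-action (the splitting depending on auxiliary choices, hence non-canonical), while the finer decomposition of each $\mathcal H^i p_* M$ by strict support is exactly the semisimplicity of $\mathrm{HM}(Y,l+i)^p$ furnished by polarizability. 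The polarization on $M$ also yields, through the primitive decomposition, a polarization on each primitive piece; this is what simultaneously underlies semisimplicity and the fact that the individual summands remain Hodge modules.

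The purity and strictness of $\mathcal H^i p_* M$ are the genuine inductive content. The inductive step proceeds by one-parameter degeneration: choosing a general function or a Lefschetz pencil $g$ on $Y$, I would relate $p_* M$ to the direct images of the nearby- and vanishing-cycle modules $\psi_g M$ and $\phi_{g,1}M$, which are supported in strictly lower dimension, are themselves (mixed) Hodge modules by the very inductive conditions defining $\mathrm{HM}$, and carry the monodromy weight filtration. Compatibility of $p_*$ with $\psi_g$ and $\phi_{g,1}$, combined with the weight estimates supplied by the $\mathfrak{sl}_2$-structure of the monodromy filtration and the polarization, then forces $\mathcal H^i p_* M$ to be pure of weight $l+i$ and closes the induction. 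The base case grounds out in classical Hodge theory: the generic structure of $M$ on its support is a polarizable variation of Hodge structure by Theorem \ref{geneq}, and along a degeneration the control of its limit is exactly Schmid's nilpotent-orbit and $SL_2$-orbit theorems together with Zucker's $L^2$ computation in the curve case, reducing ultimately to the polarized Hodge theory of a point.

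The decisive obstacle---and the reason this statement is not formal---is the strictness of $p_+(\mm,\fdot\mm)$. Strictness asserts that the Hodge filtration is transported to the direct image without loss, equivalently that the relevant Hodge-theoretic spectral sequence degenerates at $E_1$; it is false for a general filtered holonomic $\dd$-module and holds only because the inductive definition of a Hodge module builds in precisely the compatibilities of $F$ with $\psi_g$ and $\phi_{g,1}$ needed to preserve strictness across each degeneration step. Carrying strictness and the sharp weight bounds through the induction at once---so that nearby and vanishing cycles stay inside the category with their monodromy weight filtrations intact---is the technical core on which all of (1) and (2) ultimately rest.
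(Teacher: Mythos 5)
Your proposal reconstructs Saito's original proof from \cite{Sai88}, which is genuinely different from what the paper does: the paper does not prove (1) at all --- it is quoted as a black box from \cite{Sai88} --- and for (2) it gives a one-line deduction that is not the one you use. The paper's argument for (2) is pure weight yoga: once (1) is granted, $f_*M$ is a complex whose $i$-th cohomology is pure of weight $l+i$, and a complex of pure weights in the derived category of mixed Hodge modules splits non-canonically because there are no nonzero morphisms from an object of weights $\le w$ to one of weights $>w$ (\cite[\S 14.1.2, Corollary 14.4]{PS}). You instead obtain (2) from relative hard Lefschetz plus Deligne's $\mathfrak{sl}_2$ splitting criterion, which is Saito's route; it requires establishing the Lefschetz isomorphisms first, but has the advantage of being exactly what the induction naturally produces, and it likewise explains the non-canonicity. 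As for (1), your outline is architecturally accurate --- factoring through $\mathbb{P}^N\times Y$, filtered Kashiwara equivalence for the closed embedding, a single induction on $\dim\operatorname{Supp}M$ propagating strictness, purity, polarizability and hard Lefschetz together via compatibility with nearby and vanishing cycles, with Schmid's orbit theorems and Zucker's curve case at the bottom --- but be aware that it is an outline rather than a proof: the technical core you correctly single out in your last paragraph (that the inductive definition of the category forces strictness of $f_+$ and the weight bounds across each degeneration step) occupies the bulk of \cite{Sai88} and is named, not carried out. Nothing you assert is wrong, and your sketch faithfully matches the source the paper cites; it simply operates at a different level from the paper, whose own contribution for this statement is only the deduction of (2) from (1).
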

Statement (1) is called the Stability Theorem. The statement in \cite{Sai88} is more general, but this is all I need in this article. The second statement is called the Decomposition Theorem, and is easily deduced from the first because $f_*M$ is a complex of pure weights (see \cite[$\S$14.1.2]{PS}). This is also the reason why the quasi-isomorphism in (2) isn't canonical (\cite[Corollary 14.4]{PS}).
\begin{remark}
When $(\mm, \fdot\mm)$ underlies a graded-polarizable weakly mixed Hodge module $M$, Stability Theorem is still true (see \cite[Theorem 2.14]{Sai90}). But since $f_*M$ may not be of pure weights, Decomposition Theorem doesn't hold for weak mixed Hodge modules or even mixed Hodge modules. 
\end{remark}
As an easy consequence of the Stability Theorem, one has the following useful corollary:
\begin{corol}[{\cite[Example 27.1]{Sch14}}] \label{degen}
Let $f: X\longrightarrow \bullet$ be the constant morphism from a smooth projective variety, with $(\mm, \fdot\mm)$ a filtered holonomic $\dd$-module underlying some graded-polarizable weakly mixed Hodge module. Then the Hodge-de Rham spectral sequence
\[E^{p, q}_1=H^{p+q}(X, {\rm Gr}^F_{-p}{\rm DR}(\mm))\implies H^{p+q}(X, {\rm DR}(\mm))\]
degenerates at $E_1$.  
\end{corol}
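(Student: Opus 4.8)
The plan is to recognize the spectral sequence in the statement as the hypercohomology spectral sequence attached to the filtered complex $\fdot{\rm DR}(\mm)$, and then to read off its degeneration directly from the strictness assertion of the Stability Theorem applied to the constant map $a_X\colon X\to\bullet$. First I would record the purely homological fact underlying everything: for a bounded filtered complex $(K^{\bullet},\fdot)$ on $X$, the spectral sequence $E_1^{p,q}=\mathbb H^{p+q}(X,{\rm Gr}^F_{-p}K^{\bullet})\Rightarrow\mathbb H^{p+q}(X,K^{\bullet})$ degenerates at $E_1$ if and only if the filtration induced on $\mathbb H^i(X,K^{\bullet})$ is strict, i.e.\ the natural maps $\mathbb H^i(X,F_pK^{\bullet})\to\mathbb H^i(X,K^{\bullet})$ are injective for all $i$ and $p$. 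Taking $K^{\bullet}={\rm DR}(\mm)$ with its filtration $\fdot{\rm DR}(\mm)$ turns the $E_1$-degeneration into a strictness statement for the induced filtration on $H^{\ast}(X,{\rm DR}(\mm))$.

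Next I would identify this filtered hypercohomology with the filtered direct image under the constant map. By the definition of the direct image functor for (filtered) right $\dd$-modules, $a_{X+}(\mm,\fdot\mm)$ is computed by $R\Gamma(X,{\rm DR}(\mm))$ equipped with the filtration coming from $\fdot{\rm DR}(\mm)$; since $\dd_{\bullet}=\CC$, the cohomology objects of this complex are precisely the filtered vector spaces $H^i(X,{\rm DR}(\mm))$ carrying exactly the filtration appearing in the spectral sequence above. Thus the filtration that we must show is strict is literally the $F$-filtration carried by $\mathcal H^i a_{X+}(\mm,\fdot\mm)$.

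Finally I would invoke strictness. The map $a_X$ is projective because $X$ is projective, and by the weakly mixed Hodge module version of the Stability Theorem (recorded in the Remark following Theorem~\ref{stability}, after \cite[Theorem~2.14]{Sai90}) the filtered direct image $a_{X+}(\mm,\fdot\mm)$ is strict. For a target point this is exactly the assertion that the filtration on each $H^i(X,{\rm DR}(\mm))$ is strict, which by the first step is the $E_1$-degeneration we want. The one thing requiring care, rather than any genuine difficulty, is the bookkeeping of indices: matching the increasing filtration $\fdot$ and the sign in ${\rm Gr}^F_{-p}$ with Saito's conventions, together with the dimension shift by $n=\dim X$ built into the side-change operation and the de Rham functor, so that the graded pieces ${\rm Gr}^F_{-p}{\rm DR}(\mm)$ line up with the columns of $E_1$. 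No input beyond the Stability Theorem is needed; the content is entirely the translation of Saito's strictness into $E_1$-degeneration.
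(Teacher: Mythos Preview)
Your proposal is correct and follows exactly the approach the paper indicates: the corollary is stated there as ``an easy consequence of the Stability Theorem'' with a citation to \cite[Example~27.1]{Sch14}, and no further proof is given. Your argument---identifying the spectral sequence as that of the filtered complex $\fdot{\rm DR}(\mm)$, recognizing filtered hypercohomology as the filtered direct image along $a_X$, and deducing $E_1$-degeneration from strictness via the weakly mixed Hodge module version of the Stability Theorem---is precisely the intended unpacking of that remark.
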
 
Since the filtration is bounded below, it is useful to emphasize on where the filtration starts.
\begin{definition}
Suppose $M=(\mm, \fdot\mm, K)$ is a pure Hodge module on a smooth complex variety $X$. Set
\[p(M):=\text{min}\{p\ |\ F_{p}\mm\neq 0\},\]
and
\[S(M):=F_{p(M)}\mm= {\rm Gr}^F_{p(M)}{\rm DR}(\mm).\]
\end{definition}
$S(M)$ also makes sense when $X$ is singular, because $S(M)= {\rm Gr}^F_{p(M)}{\rm DR}(\mm)$ and the latter one is intrinsic for $M$ as explained in Definition \ref{singhm}. $S(M)$ will play a similar role as the canonical sheaf does in vanishing theorems.
From the Decomposition Theorem, Saito also proved,
\begin{prop}[{\cite[Theorem 28.1]{Sch14}}] \label{kol}
Under the setting of Theorem \ref{stability}, 
\[Rf_*{\rm Gr}^F_p{\rm DR}(\mm)\simeq\bigoplus_i{\rm Gr}^F_p{\rm DR}(\mm_i)[-i].\] In particular,
\[Rf_*S(M)\simeq \bigoplus_iF_{p(M)}\mm_i[-i],\]
where $(\mm_i, \fdot\mm_i):=\mathcal H^if_+(\mm, \fdot\mm)$.
\end{prop}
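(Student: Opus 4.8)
The plan is to deduce both isomorphisms from the two parts of Theorem \ref{stability} together with a single compatibility statement: for every integer $p$ there is a natural isomorphism
\[Rf_*{\rm Gr}^F_p{\rm DR}(\mm)\simeq{\rm Gr}^F_p{\rm DR}\big(f_+(\mm,\fdot\mm)\big)\]
in the derived category of coherent sheaves on $Y$. I would establish this in two stages. First, at the level of plain $\dd$-modules the de Rham functor intertwines the proper pushforward with $Rf_*$, i.e. $Rf_*{\rm DR}(\mm)\simeq{\rm DR}(f_+\mm)$; this is the standard fact that $f_+$ is computed via the transfer bimodule and the relative de Rham (Spencer) complex, combined with the projection formula. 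Second, I would promote this to a \emph{filtered} quasi-isomorphism and pass to associated graded objects. The essential input here is the strictness of $f_+(\mm,\fdot\mm)$ provided by part (1) of Theorem \ref{stability}: strictness is exactly the condition ensuring that taking ${\rm Gr}^F$ commutes with taking cohomology of the direct-image complex, so that no filtration jumps are lost. Upon passing to ${\rm Gr}^F$ the filtered transfer module is replaced by symbols (a pullback along the cotangent map) and the $\dd$-module pushforward becomes the honest $Rf_*$ of the Koszul-type complex computing ${\rm Gr}^F_p{\rm DR}$. I expect this strictness step to be the main obstacle, as it is where Saito's theory genuinely enters.

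Granting the compatibility isomorphism, the first assertion is then formal. By the Decomposition Theorem (part (2) of Theorem \ref{stability}) there is a non-canonical isomorphism $f_+(\mm,\fdot\mm)\simeq\bigoplus_i(\mm_i,\fdot\mm_i)[-i]$ of filtered $\dd$-modules, where $(\mm_i,\fdot\mm_i)=\mathcal H^if_+(\mm,\fdot\mm)$. Since ${\rm Gr}^F_p{\rm DR}(-)$ is additive and commutes with shifts, applying it gives ${\rm Gr}^F_p{\rm DR}(f_+\mm)\simeq\bigoplus_i{\rm Gr}^F_p{\rm DR}(\mm_i)[-i]$, and combining with the compatibility isomorphism yields
\[Rf_*{\rm Gr}^F_p{\rm DR}(\mm)\simeq\bigoplus_i{\rm Gr}^F_p{\rm DR}(\mm_i)[-i]\]
for every $p$.

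For the \emph{in particular} statement I would specialize to $p=p(M)$ and exploit that the lowest graded piece of a de Rham complex collapses. Because $F_{p(M)-1}\mm=0$, every term ${\rm Gr}^F_q\mm$ with $q<p(M)$ vanishes, so ${\rm Gr}^F_{p(M)}{\rm DR}(\mm)$ reduces to its top term $F_{p(M)}\mm=S(M)$ in degree $0$, and more generally ${\rm Gr}^F_p{\rm DR}(\mm)=0$ for every $p<p(M)$. Feeding $p<p(M)$ into the formula above forces $\bigoplus_i{\rm Gr}^F_p{\rm DR}(\mm_i)[-i]\simeq0$; since a direct summand of a zero object is zero, each ${\rm Gr}^F_p{\rm DR}(\mm_i)\simeq0$ for $p<p(M)$. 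Were some $p(\mm_i)<p(M)$, the same collapse applied to $\mm_i$ would give ${\rm Gr}^F_{p(\mm_i)}{\rm DR}(\mm_i)=F_{p(\mm_i)}\mm_i\neq0$, a contradiction; hence $p(\mm_i)\ge p(M)$, i.e. $F_{p(M)-1}\mm_i=0$, for all $i$. Consequently ${\rm Gr}^F_{p(M)}{\rm DR}(\mm_i)$ also collapses to $F_{p(M)}\mm_i$ in degree $0$, and substituting $p=p(M)$ into the displayed formula gives $Rf_*S(M)\simeq\bigoplus_iF_{p(M)}\mm_i[-i]$, as desired.
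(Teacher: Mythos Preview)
The paper does not actually prove this proposition; it is stated with a citation to \cite[Theorem 28.1]{Sch14} and attributed to Saito, with no argument given beyond the remark ``From the Decomposition Theorem, Saito also proved.'' So there is no proof in the paper to compare against.

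Your proposal is correct and is exactly the standard argument one finds in the cited reference: Saito's strictness (part (1) of Theorem \ref{stability}) is precisely what makes ${\rm Gr}^F$ commute with $f_+$, yielding the compatibility $Rf_*{\rm Gr}^F_p{\rm DR}(\mm)\simeq{\rm Gr}^F_p{\rm DR}(f_+(\mm,\fdot\mm))$, and the Decomposition Theorem (part (2)) then splits the right-hand side into the sum over $i$. Your derivation of the ``in particular'' via the collapse of ${\rm Gr}^F_{p(M)}{\rm DR}$ to its degree-$0$ term and the contrapositive argument forcing $p(\mm_i)\ge p(M)$ is clean and correct. One minor remark: you phrase the first stage as something you ``would establish,'' but the filtered compatibility with proper pushforward (often called Laumon's formula, or Saito's formula in this context) is itself a nontrivial result whose proof you only sketch; in a fully self-contained write-up you would want to either supply the details of the relative Spencer/de Rham computation or cite it explicitly (e.g.\ \cite[\S2.3.7]{Sai88} or \cite[\S27--28]{Sch14}).
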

\subsection{$V$-filtrations and the torsion-freeness of $S(M)$}
The $V$-filtration is fundamentally important in Saito's theory of Hodge modules. It is used in this section to prove the torsion-freeness of $S(M)$ that due to Saito because of completeness.  \par
Let $X$ be a smooth complex variety or complex manifold of dimension $n$ and let $X_0$ be a smooth divisor of $X$ with local defining equation $t$. $\dd_X$ is filtered by
\[V_{i}\dd_X=\{P\in\dd_X| P\cdot\mathcal{I}^j_{X_0}\subseteq\mathcal{I}^{j-i}_{X_0}\},\]
where $\mathcal{I}_{X_0}$ is the ideal sheaf of $X_0$ with the convention that $\mathcal{I}^j_{X_0}=\oo_X$ for $j\leq 0$.
\begin{definition}[$V$-filtration]
A $V$-filtration of a coherent right $\dd$-module $\mm$ along $X_0$ is an increasing filtration $V_{\alpha}\mm$ indexed by rational number $\alpha$ satisfying, 
\begin{itemize}
\item The filtration is exhaustive, and each $V_{\alpha}\mm$ is a coherent $V_0\dd_X$-module. \\
\item $V_{\alpha}\mm\cdot V_i\dd_X\subseteq V_{\alpha+i}\mm$ for every $\alpha\in\QQ$ and $i\in\ZZ$; and 
\[V_{\alpha}\mm\cdot t=V_{\alpha-1}\mm, \  \text{for}\  \alpha<0\]
\item The action of $t\partial_t-\alpha$ on $Gr^V_{\alpha}\mm$ is nilpotent for any $\alpha$, where $Gr^V_{\alpha}\mm:=V_{\alpha}\mm/V_{<\alpha}\mm$ and $V_{<\alpha}\mm=\cup_{\beta<\alpha}V_\alpha\mm$.
\end{itemize}
\end{definition}
In fact, the morphism of the action $t$ in the second requirement of the above definition is an isomorphism (\cite[Lemme 3.1.4]{Sai88}), i.e.
\[t: V_{\alpha}\mm\xrightarrow{\simeq} V_{\alpha-1}\mm\]
is an isomorphism for all $\alpha<0$. \par
The $V$-filtration is a refinement of the Kashiwara-Malgrange filtration. It is known that the Kashiwara-Malgrange filtration exists and is unique for any regular holonomic $\dd$-module. It can be refined to a $V$-filtration if the monodromy action is quasi-unipotent. Hence in particular, the $V$-filtration exists for any Hodge module. See \cite[\S8]{Sch14} for a detailed exposition.  \par
For filtered holonomic $\dd$-modules, one needs to consider the compatibility of the $V$-filtration and the $\fdot$ filtration. Such compatibility condition is called "quasi-unipotent and regular along $X_0$ or some locally defined holomorphic function $f$" (see \cite[\S3.2]{Sai88}). Pure Hodge modules are quasi-unipotent and regular along any locally defined holomorphic function $f$. This condition makes the Hodge filtration of any Hodge module only depends on its restriction to the open subset complementary to the divisor defined by $f$. To be precise, if $M=(\mm, \fdot\mm, K)$ is a pure Hodge module of strict support $Z$ on $X$, then for holomorphic function $f:X\longrightarrow \CC$ whose restriction to $Z$ is not constant, 
\begin{equation} \label{filtr}
F_{p}\mm_f=\sum_{i=0}^\infty (V_{<0}\mm_f\cap j_*j^*F_{p-i}\mm_f)\partial_t^i,
\end{equation}
where $(\mm_f, \fdot\mm_f)=i_{f,+}(\mm, \fdot\mm)$, $i_f$ is the graph embedding of $f$, and $t$ is the coordinate of $\CC$. 
\begin{prop}\label{otorsionfree}
Let $X$ be a complex variety (irreducible), and $M$ a pure Hodge module strictly supported on $X$. Then $S(M)$ is torsion-free.
\end{prop}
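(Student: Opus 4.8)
The plan is to show that $S(M)$ has no $f$-torsion for every local defining function of a proper closed subvariety, and then to conclude torsion-freeness from this. Since torsion-freeness is a local question on the integral variety $X$, and $S(M)$ is intrinsic to $M$ (Definition \ref{singhm}), I would work locally, embedding $X$ into a smooth ambient manifold and regarding $S(M)={\rm Gr}^F_{p(M)}{\rm DR}(\mm)=F_{p(M)}\mm$ as a coherent sheaf supported on $X$. If $S(M)$ had nonzero torsion, that torsion would be supported on some proper closed $Z\subsetneq X$; since $X$ is irreducible I may choose a local holomorphic function $f$ vanishing on $Z$ but not identically on $X$, so that $f|_X$ is nonconstant and any torsion section near a point of $Z$ is annihilated by a power of $f$. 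It therefore suffices to prove that multiplication by $f$ is injective on $S(M)$.

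To access $f$ I pass to the graph embedding $i_f$ and set $(\mm_f,\fdot\mm_f)=i_{f,+}(\mm,\fdot\mm)$, with $t$ the coordinate on $\CC$. Under Kashiwara's equivalence $\mm_f\cong\bigoplus_{i\ge 0}\mm\,\partial_t^i$, and the filtered direct image places $F_{p-i}\mm$ in $t$-degree $i$. By minimality of $p(M)$ one has $F_{p(M)-i}\mm=0$ for all $i\ge 1$, so every higher $\partial_t$-term drops out and $F_{p(M)}\mm_f=F_{p(M)}\mm=S(M)$ sits in degree zero; on this degree-zero piece the action of $t$ is exactly multiplication by $f$. Feeding $p=p(M)$ into the compatibility formula \eqref{filtr} (applicable because a pure Hodge module is quasi-unipotent and regular along $f$, and $f|_X$ is nonconstant) then collapses the sum to its $i=0$ term and yields $S(M)=V_{<0}\mm_f\cap j_*j^*F_{p(M)}\mm_f$. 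In particular $S(M)\subseteq V_{<0}\mm_f$.

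Now I would invoke the isomorphism $t\colon V_{\alpha}\mm_f\xrightarrow{\ \simeq\ }V_{\alpha-1}\mm_f$ for $\alpha<0$ recorded just after the definition of the $V$-filtration: it shows that $t$ acts injectively on $V_{<0}\mm_f$. Since $S(M)\subseteq V_{<0}\mm_f$ and $t$ restricts to multiplication by $f$ there, $f$ acts injectively on $S(M)$, that is, $S(M)$ has no $f$-torsion. As $f$ was an arbitrary nonzero local function cutting out the support of a hypothetical torsion section, this forces the torsion subsheaf to vanish, so $S(M)$ is torsion-free.

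The main obstacle is the bookkeeping in the second step: one must verify carefully that the naive filtered pushforward places $S(M)$ in $t$-degree zero with $t$ acting as $f$, and that substituting $p=p(M)$ into \eqref{filtr} genuinely annihilates every term with $i\ge 1$, so that the formula degenerates to the clean inclusion $S(M)\subseteq V_{<0}\mm_f$. Once this inclusion is secured, torsion-freeness follows immediately from the injectivity of $t$ on the negative part of the $V$-filtration. A secondary point deserving care is the reduction to a single function $f$: one should confirm that working with a generic local defining equation of (a component of) the torsion support suffices, which it does precisely because $X$ is integral and any such torsion section is killed by a power of $f$.
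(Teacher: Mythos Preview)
Your proposal is correct and follows essentially the same route as the paper's proof: reduce to a local embedding, pick a local function $f$ annihilating the putative torsion, pass to the graph embedding, use \eqref{filtr} at $p=p(M)$ to get $S(M)=F_{p(M)}\mm_f\subseteq V_{<0}\mm_f$, and conclude from the injectivity of $t$ on $V_{<0}\mm_f$. The only difference is that you spell out in more detail why the lowest filtered piece survives unchanged under $i_{f,+}$ and why $t$ acts as $f$ there, points the paper leaves implicit.
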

\begin{proof}
Since torsion-freeness is local, we can assume $X\hookrightarrow Y$ embedded into a complex manifold Y, and the underlying filtered $\dd$-module is $(\mm, \fdot\mm)$ on $Y$. We know 
\[S(M)=F_{p(M)}\mm\]
is a coherent $\oo_X$-module. Suppose $\mathcal T$ is the torsion submodule of $S(M)$, and $\mathcal T$ is annihilated by $\tilde f$ which can be lifted to a holomorphic function $f:Y\longrightarrow \CC$ on Y. By equation \eqref{filtr},
\[S(M)=F_{p(M)}\mm_f=V_{<0}\mm_f\cap j_*j^*F_{p(M)}\mm_f.\]
And the action of $\tilde f$ on $S(M)$ is exactly the action of $t$ on $F_{p(M)}\mm_f$. But since  $t$ acts on $V_\alpha$ injectively for $\alpha<0$, $\mathcal T$ must be 0.
\end{proof}
Because of the compatibility of the $V$-filtration and the Hodge filtration, Saito also proved the following theorem about $p(M)$.
\begin{prop}[{\cite[Proposition 2.6]{Sai91}}] \label{p(M)}
Suppose $f:X\longrightarrow Y$ is a projective morphism of complex manifolds with $M\in{HM^Z(X, l)^p}$ for some subvariety $Z$. If $M'$ is a direct summand of $\mathcal H^jf_*M$ and its strict support  $Z'\neq f(Z)$, then 
\[p(M')> p(M).\]
\end{prop}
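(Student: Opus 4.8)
The plan is to combine the Decomposition Theorem with the Kollár-type description of $Rf_*S(M)$ in Proposition \ref{kol}, and to reduce the assertion to a torsion-freeness statement for the lowest Hodge piece of the direct image. Write $(\mm_i,F_\bullet\mm_i):=\mathcal H^i f_+(\mm,F_\bullet\mm)$, so that $M_i:=\mathcal H^i f_*M$ is a pure Hodge module on $Y$ by the Stability Theorem (Theorem \ref{stability}). By strict-support decomposition each $M_i$ splits as $\bigoplus_{Z'}M'_{Z',i}$, and $M'$ is one of these summands. Since $\mathrm{Supp}\,M_i\subseteq f(Z)$ we have $Z'\subseteq f(Z)$, so the hypothesis $Z'\neq f(Z)$ in fact means $Z'\subsetneq f(Z)$, a proper closed subvariety of the irreducible $f(Z)$.

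First I would record the weak inequality $p(M_i)\ge p(M)$ for every $i$. Because $\mathrm{Gr}^F_p\mm=0$ for $p<p(M)$, every term $\bigwedge^{n-k}\scrt_X\otimes\mathrm{Gr}^F_{p-n+k}\mm$ of the complex $\mathrm{Gr}^F_p\mathrm{DR}(\mm)$ vanishes when $p<p(M)$; since the filtered direct image $f_+(\mm,F_\bullet\mm)$ is strict (Theorem \ref{stability}(1)) and $F_\bullet\mm$ starts in degree $p(M)$, the induced filtration on each $\mm_i$ starts no earlier, i.e. $F_p\mm_i=0$ for $p<p(M)$. In particular $F_p\mm'=0$ for every summand, so $p(M')\ge p(M)$.

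To upgrade this to the strict inequality, apply Proposition \ref{kol} with $p=p(M)$ to obtain $R^jf_*S(M)=\mathcal H^j\bigl(Rf_*S(M)\bigr)=F_{p(M)}\mm_j=\bigoplus_{Z'}F_{p(M)}\mm'_{Z',j}$. Suppose, for contradiction, that the summand $M'$ with $Z'\subsetneq f(Z)$ satisfied $p(M')=p(M)$. Then $F_{p(M)}\mm'=S(M')\neq0$, and by Proposition \ref{otorsionfree} applied to $M'$ this sheaf is torsion-free on $Z'$; being nonzero on the irreducible $Z'$, its support is all of $Z'$. Thus $R^jf_*S(M)$ would contain, as a direct summand, a nonzero sheaf supported on the proper subvariety $Z'\subsetneq f(Z)$. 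This contradicts the statement $(\star)$ that $R^jf_*S(M)$ is torsion-free as an $\oo_{f(Z)}$-module, which I therefore isolate as the single remaining input; granting $(\star)$, every summand with $Z'\neq f(Z)$ has $F_{p(M)}\mm'=0$, that is $p(M')>p(M)$.

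The main obstacle is precisely $(\star)$. Note that Proposition \ref{otorsionfree} by itself only gives torsion-freeness of each $S(M')$ on its own support $Z'$, and a direct sum of such sheaves over varying $Z'$ is typically \emph{not} torsion-free over $f(Z)$; the genuinely new content is the joint torsion-freeness, the Kollár-type theorem for the lowest Hodge piece, and there is no circularity since its proof does not invoke the present proposition. I would establish $(\star)$ by the same $V$-filtration mechanism used for Proposition \ref{otorsionfree}, now applied to the pure Hodge module $M_j$: choosing locally a general holomorphic function $g$ on $Y$ whose zero locus meets $f(Z)$ properly, form the graph embedding and compare the $V$-filtration of $M_j$ along $g$ with that of $M$ along $g\circ f$. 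The compatibility of the Hodge and $V$-filtrations (equation \eqref{filtr}) for $M_j$, together with the injectivity of the $t$-action on $V_{<0}$, then shows that no nonzero local section of $F_{p(M)}\mm_j=R^jf_*S(M)$ is annihilated by $g$. The hard point, where Saito's machinery enters, is the compatibility of the filtered direct image with the $V$-filtration, equivalently that the filtration produced on $\mm_j$ by Proposition \ref{kol} is the honest Hodge filtration of $M_j$ and inherits the strict behavior of $t$ on $\mathrm{Gr}^V$; granting this compatibility, $(\star)$ and hence the proposition follow.
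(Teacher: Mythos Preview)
The paper does not actually prove this proposition; it is quoted from \cite[Proposition~2.6]{Sai91} with only the remark that it follows from ``the compatibility of the $V$-filtration and the Hodge filtration.'' So there is no detailed argument here to compare against. What can be said is that in the paper's logical architecture the dependence runs the \emph{opposite} way from yours: Proposition~\ref{p(M)} is taken as an input, and the Koll\'ar-type torsion-freeness of $R^if_*S(M)$ (Corollary~\ref{torsionfree}, i.e.\ your statement $(\star)$) is then \emph{deduced} from it together with Proposition~\ref{otorsionfree}. Your proposal reverses this, deriving $p(M')>p(M)$ from $(\star)$.

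You correctly flag that this is not formally circular provided $(\star)$ is established independently, but your sketch of that independent proof is where the gap lies. Equation~\eqref{filtr}, which you invoke for $M_j$, is stated for a pure Hodge module of \emph{strict} support $Z$ and a function whose restriction to $Z$ is non-constant; the module $M_j=\mathcal H^jf_*M$ is in general a direct sum over several strict supports, and the torsion you want to rule out comes precisely from a summand supported on some $Z'\subsetneq f(Z)$, annihilated by a $g$ that vanishes identically on $Z'$---exactly the excluded case. So the argument of Proposition~\ref{otorsionfree} does not transfer to $M_j$ as written. What one genuinely needs, and what you gesture at as ``the hard point,'' is Saito's result that the filtered direct image $f_+$ is compatible with the $V$-filtrations along $g$ on $Y$ and $g\circ f$ on $X$; but that compatibility is precisely the content behind Saito's direct proof of Proposition~\ref{p(M)} in \cite{Sai91}. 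Your reorganization therefore neither avoids nor simplifies the essential input, and the final paragraph is a pointer to Saito's machinery rather than a self-contained argument.
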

Combining Proposition \ref{otorsionfree} and Proposition \ref{p(M)}, one obtains,
\begin{corol}\label{torsionfree}
Let $f:X\longrightarrow Y$ be a surjective projective morphism of complex varieties, and let $M$ be a polarizable pure Hodge module strictly supported on $X$. Then
$R^if_*S(M)$ is torsion-free for all $i$.
\end{corol}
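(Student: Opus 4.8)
The plan is to reduce the statement to the torsion-freeness of the lowest Hodge piece of a \emph{single} strictly supported Hodge module, which is exactly Proposition \ref{otorsionfree}. Torsion-freeness is a local question on the integral variety $Y$, and $S(M)$, $R^if_*S(M)$ are all intrinsic; so after replacing $Y$ by a local model and using the graph factorization $X\hookrightarrow \mathbb{P}^N\times Y'\to Y'$ for a smooth ambient $Y'$, I may assume that $f$ is a projective morphism between complex manifolds to which the Stability/Decomposition Theorem and Proposition \ref{kol} apply. (All direct-image objects computed this way are supported on $f(X)\subseteq Y$ and are independent of the chosen embeddings.)

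First I would apply Proposition \ref{kol}, which gives a direct-sum decomposition $Rf_*S(M)\simeq\bigoplus_i F_{p(M)}\mm_i[-i]$ with $(\mm_i,\fdot\mm_i)=\mathcal H^if_+(\mm,\fdot\mm)$. Taking cohomology sheaves identifies $R^if_*S(M)\cong F_{p(M)}\mm_i$, where $\mm_i$ underlies $\mathcal H^if_*M\in\textup{HM}(Y,n+i)^p$. Thus it suffices to prove that each $\oo_Y$-module $F_{p(M)}\mm_i$ is torsion-free. Next I would use the semisimplicity of the category of polarizable pure Hodge modules to split $\mathcal H^if_*M=\bigoplus_{Z'\subseteq Y} M_i^{Z'}$ according to strict support, so that $F_{p(M)}\mm_i=\bigoplus_{Z'}F_{p(M)}\mm_i^{Z'}$.

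Since $f$ is surjective, $f(X)=Y$, and Proposition \ref{p(M)} applies: every summand $M_i^{Z'}$ with $Z'\subsetneq Y$ satisfies $p(M_i^{Z'})>p(M)$, whence $F_{p(M)}\mm_i^{Z'}=0$. Therefore only the full-support summand $M_i^Y$ survives and $R^if_*S(M)\cong F_{p(M)}\mm_i^Y$, with $M_i^Y$ strictly supported on the \emph{irreducible} variety $Y$.

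It remains to show $F_{p(M)}\mm_i^Y$ is torsion-free, and this is the heart of the matter. The point to establish is that the lowest index of the Hodge filtration does not drop under the direct image, i.e. $p(M_i^Y)\geq p(M)$; this is built into the structure of $f_+$, where the closed-embedding factor only shifts the index and the smooth-projection factor raises it through the $\partial_t$-operators, so no Hodge level below $p(M)$ can be created. Granting this, $F_{p(M)}\mm_i^Y$ is either $0$ (when $p(M)<p(M_i^Y)$) or equals $S(M_i^Y)=F_{p(M_i^Y)}\mm_i^Y$ (when $p(M)=p(M_i^Y)$); in the latter case Proposition \ref{otorsionfree}, applied to the strictly-supported module $M_i^Y$ on the irreducible $Y$, gives torsion-freeness. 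The main obstacle is precisely the monotonicity $p(\mathcal H^if_*M)\geq p(M)$: I expect to either extract it from the computation in \cite{Sai91} underlying Proposition \ref{p(M)}, or to verify it directly from the formula \eqref{filtr} for the Hodge filtration of the direct image, showing that the surviving contribution is genuinely the lowest piece $S(M_i^Y)$ and hence torsion-free.
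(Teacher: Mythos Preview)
Your approach is correct and is exactly what the paper intends: the corollary is stated there simply as the combination of Proposition~\ref{otorsionfree} and Proposition~\ref{p(M)} (via Proposition~\ref{kol}), with no further argument. Your reduction, use of the strict-support decomposition, elimination of the summands with $Z'\neq Y$ by Proposition~\ref{p(M)}, and final appeal to Proposition~\ref{otorsionfree} are precisely the steps implicit in that one-line deduction.

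The ``obstacle'' you flag, namely the monotonicity $p(\mathcal H^if_*M)\ge p(M)$, is not actually open: it is already contained in the ``In particular'' clause of Proposition~\ref{kol}. That clause asserts $Rf_*S(M)\simeq\bigoplus_i F_{p(M)}\mm_i[-i]$ with each $F_{p(M)}\mm_i$ a \emph{sheaf} placed in degree $i$; comparing with the first displayed isomorphism of the same proposition for $p=p(M)$, this is exactly the statement that ${\rm Gr}^F_{p(M)}{\rm DR}(\mm_i)$ reduces to its last term, i.e.\ $F_{p(M)-1}\mm_i=0$. The underlying reason is elementary: the filtration on the transfer bimodule $\dd_{X\to Y}$ starts in degree $0$, so by strictness the Hodge filtration on $\mathcal H^if_+(\mm,\fdot)$ cannot start below $p(M)$. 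You do not need to dig into \cite{Sai91} or formula~\eqref{filtr} for this; it is a general feature of strict filtered direct images and is taken for granted in the paper. Once you accept that, your argument is complete as written.
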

This corollary was first conjectured by Koll\'{a}r as generalizations of his theorem about higher direct images of dualizing sheaves \cite{Kol}, and was proved by M. Saito in \cite{Sai91}. 
\begin{corol}\label{hdim}
Assume $f:X\longrightarrow Y$ is a birational morphism of complex projective varieties, and $M\in HM^X(X, l)^p$. Then
\[R^if_*S(M) = \left\{ \begin{array}{ll}
         S(M') & \mbox{ $i = 0$}\\
        0 & \mbox{ $i<0$}\end{array} \right., \]
where $M'$ is the direct summand of $\mathcal H^0f_*M$ strictly supported on Y.
\end{corol}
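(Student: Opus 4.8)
The plan is to reduce everything to the decomposition in Proposition \ref{kol} and then exploit the semisimple structure of polarizable pure Hodge modules together with the birationality of $f$. Writing $(\mm_i, \fdot\mm_i) = \mathcal H^i f_+(\mm, \fdot\mm)$, Proposition \ref{kol} gives
\[ Rf_* S(M) \simeq \bigoplus_i F_{p(M)}\mm_i[-i], \]
so that $R^i f_* S(M) = F_{p(M)}\mm_i$ for every $i$. By the Stability Theorem each $\mm_i$ underlies the polarizable pure Hodge module $\mathcal H^i f_* M$, which decomposes by strict support as $\mathcal H^i f_* M = \bigoplus_{Z'} M'_{i,Z'}$ with $M'_{i,Z'}\in \mathrm{HM}^{Z'}(Y,\cdot)^p$; this is a direct sum of filtered $\dd$-modules, hence $F_{p(M)}\mm_i = \bigoplus_{Z'} F_{p(M)}\mm'_{i,Z'}$. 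It therefore suffices to decide, for each $i$, which summands survive after applying $F_{p(M)}$.

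The decisive input is Proposition \ref{p(M)}. Since $f$ is birational and $M$ has strict support $X$, the map is surjective with $f(X)=Y$, so any summand $M'_{i,Z'}$ with $Z'\subsetneq Y$ satisfies $Z'\neq f(X)$ and hence $p(M'_{i,Z'})>p(M)$; for such a summand $F_{p(M)}\mm'_{i,Z'}=0$. Thus only summands with strict support equal to $Y$ can contribute. To locate those I would use the generic behaviour of $f$: there is a dense Zariski open $V\subseteq Y$ over which $f$ restricts to an isomorphism $U:=f^{-1}(V)\xrightarrow{\sim}V$, and over $V$ one has $\mathcal H^0 f_* M = M|_{U}$ while $\mathcal H^i f_* M = 0$ for $i\neq 0$. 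By Theorem \ref{geneq} a summand with strict support $Y$ is detected by its nonzero generic variation of Hodge structure on $Y$, so for $i\neq 0$ no such summand exists; consequently every summand of $\mathcal H^i f_* M$ has $F_{p(M)}=0$, giving $R^i f_* S(M)=0$. This covers the stated range $i<0$ and, by the same argument, all $i\neq 0$.

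For $i=0$ the unique summand with strict support $Y$ is $M'$, and under the birational identification its generic variation of Hodge structure $\vv$ coincides with that of $M$. It remains to check $p(M')=p(M)$ and $F_{p(M)}\mm'=S(M')$. Here I would invoke torsion-freeness (Proposition \ref{otorsionfree}): since $S(M)=F_{p(M)}\mm$ is torsion-free it is determined by its restriction to $U$, where $F_p\mm|_U = F_p\vv$; hence $p(M)$ equals the lowest index $p$ with $F_p\vv\neq 0$. Applying the identical reasoning to $M'$, which has the same generic variation $\vv$, yields $p(M')=p(M)$, so $F_{p(M)}\mm'=F_{p(M')}\mm'=S(M')$. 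Combining this with the vanishing of the remaining summands of $\mathcal H^0 f_* M$ gives $R^0 f_* S(M)=F_{p(M)}\mm'=S(M')$.

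The step I expect to be most delicate is the bookkeeping of the last paragraph: making precise that the generic variation of Hodge structure pins down the starting index $p$ of the Hodge filtration for a Hodge module with full strict support (where torsion-freeness is exactly what is needed), and that $M'$ genuinely inherits the generic variation of $M$ through the birational identification. Everything else is a direct assembly of Propositions \ref{kol}, \ref{otorsionfree} and \ref{p(M)} together with Theorem \ref{geneq}.
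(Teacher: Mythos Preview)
Your proof is correct, but it takes a different route from the paper for the vanishing part. The paper dispatches $R^if_*S(M)=0$ for $i>0$ in one line: by Corollary \ref{torsionfree} these sheaves are torsion-free, while birationality of $f$ forces them to be supported on the exceptional locus, hence torsion; a sheaf that is both is zero. You instead unpack the decomposition from Proposition \ref{kol}, argue via the strict-support decomposition of each $\mathcal H^i f_*M$, and use the generic isomorphism to rule out any summand with strict support $Y$ when $i\neq 0$. This is more hands-on but perfectly valid, and it has the virtue of making the $i=0$ case and the vanishing case parallel rather than invoking two separate mechanisms.

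For $i=0$ the paper simply cites Proposition \ref{p(M)}, leaving implicit both the use of Proposition \ref{kol} and the verification that $p(M')=p(M)$. Your argument for this last point, reading off the starting index from the generic variation via torsion-freeness (Proposition \ref{otorsionfree}), is exactly what is needed to fill that gap, and is correct. One small remark: the statement as printed has $i<0$, which is trivial for the derived pushforward of a sheaf; the substantive content (and what the paper actually proves) is $i>0$, and you rightly cover that case.
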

\begin{proof}
The fact that $R^0f_*S(M)=S(M')$ follows from Proposition \ref{p(M)}. Now $R^if_*S(M)=0$ for $i>0$ because $R^if_*S(M)$ is both torsion and torsion-free. 
\end{proof}
\subsection{Non-characteristic inverse image} \label{nonchar}
Let $f:X\longrightarrow Y$ be a morphism of complex manifolds. We have the following two morphisms of cotangent bundles,
\[p_2: X\times_Y T^*Y\longrightarrow T^*Y\] and
\[df^*: X\times_Y T^*Y\longrightarrow T^*X\] given by differential, $df^*(x, \omega)=df^*_x(\omega)$.

\begin{definition}[Non-Characteristic morphism {\cite[\S3.5]{Sai88}}]
Let $(\mm, \fdot\mm)$ be a filtered coherent $\dd_Y$-module. $f:X\longrightarrow Y$ is said to be non-characteristic for $(\mm, \fdot\mm)$ if the following two conditions are satisfied:
\begin{itemize}
\item $L^if^*{\rm Gr}^F_p(\mm)=0$ for all $i\neq0$ and all $k$;\\
\item $df^*|_{p_2^{-1}\text{Ch}(\mm)}$ is finite.
\end{itemize}
If $f$ is a closed embedding, one says that $X$ is non-characteristic for $(\mm, \fdot\mm)$ if $f$ is so.\par
The first condition means that the pull-back of the filtration is still a filtration of the pull-back of $\mm$;
the second says the pull-back filtration is also good. Hence one gets the filtered pull-back $f^*(\mm, \fdot\mm)=(\tilde\mm, \fdot\tilde\mm)$ when $f$ is non-characteristic for $(\mm, \fdot\mm)$ by
\[\tilde\mm=f^{-1}\mm\otimes_{f^{-1}\oo_Y}\omega_{Y/X}\]
and
\[F_p\tilde\mm=f^{-1}F_{p+d}\mm\otimes_{f^{-1}\oo_Y}\omega_{Y/X},\]
where $d=\text{dim}X-\text{dim}Y$.
Here the degree of the filtration of $\tilde\mm$ is shifted because $\mm$ is a right $\dd$-module. $\tilde\mm$ is holonomic if $\mm$ is so (\cite[Lemme 3.5.5]{Sai88}). If $(\mm, \fdot\mm)$ underlies a polarizable pure Hodge module $M$ of weight $l$ on $Y$, then $f^*(\mm, \fdot\mm)$ underlies a polarizable pure Hodge module of weight $l+d$, denoted by $f^*M$ (\cite[Theorem 9.3]{Sch14a}). 
\end{definition}
%A duality functor can be defined for Hodge modules which is compatible with the Verdier Duality of perverse sheaves. See \cite{Sai88} \S2.4 and \S 5.1 for pure case. The following lemma is an outcome of the compatibility of the duality for Hodge modules and the Grothendieck duality and the existance of polarization. It will be needed later.
%\begin{lemma}[\cite{Pop}, Lemma 6.1]
%If X is a smooth projective variety of dimension $n$ and $M=(\mm, \fdot\mm, K)\in\textup{HM}(X, l)^p$, then,
%\[R\mathcal{H}om_{\oo_X}({\rm Gr}^F_k{\rm DR}(\mm), \omega_X)[n]\simeq {\rm Gr}^F_{-k-l}{\rm DR}_X(\mm).\]
%The left-hand side is the Grothendieck duality of ${\rm Gr}^F_k{\rm DR}(\mm)$.
%\end{lemma}
\section{Hodge modules and the Deligne extension}\label{snc}
Let $X$ be a complex smooth projective variety, and let $D=\sum^r_{i=1} D_i$ be a reduced simple normal crossings divisor with irreducible components $D_i$. Set $U=X\setminus D$, and 
\[j:U\hookrightarrow X,\] the open embedding. \par 
Assume that $\mathbb V=(\mathcal V, F^{\bullet}\mathcal V, \mathbb{V}_\QQ)$ is a polarizable variation of Hodge structure on $U$. Then it is well-known that $\mathcal V$ extends to vector bundles with flat connections with logarithmic poles along $D$. Such extension is unique if the eigenvalues of the residue along each $D_i$ are required to be in a fixed strip of $\CC$ of length $1$ (\cite[Theorem 5.2.17]{HTT}). Because of the monodromy theorem (see \cite{Sch}), the eigenvalues are in fact rational numbers in this setting. The Deligne canonical extension is the extension with eigenvalues in $[0, 1)$, denoted by $\mathcal{\tilde V}$. $\mathcal{\tilde V}$ is filtered by 
\begin{equation}\label{filtrDe}
F_p\mathcal{\tilde V}:=\mathcal{\tilde V}\cap j_*(F_p\mathcal V),
\end{equation}
where $F_p\mathcal V=F^{-p}\mathcal V$. Because of Schmid's theorem on nilpotent orbits, filtrations of all Deligne extensions are locally free (see \cite[\S3.b]{Sai90}).
Hence we have the filtered logarithmic de Rham complex for $\tilde{\mathcal V}$, similar to the filtered de Rham complex,
\[{\rm DR}_{(X,D)}(\tilde{\mathcal V}, \fdot):=\lbrack\tilde{\mathcal V}\longrightarrow \Omega^1({\rm log}~D)\otimes\tilde{\mathcal V}\longrightarrow\cdots\longrightarrow\Omega^n({\rm log}~D)\otimes\tilde{\mathcal V}\rbrack [n]. \] with filtration
\[F_p {\rm DR}_{(X,D)}(\tilde{\mathcal V}, \fdot):=\lbrack F_p\tilde{\mathcal V}\longrightarrow \Omega^1({\rm log}~D)\otimes F_{p+1}\tilde{\mathcal V}\longrightarrow\cdots\longrightarrow\Omega^n({\rm log}~D)\otimes F_{p+n}\tilde{\mathcal V}\rbrack [n], \]
where $n=\text{dim}X$. 
\par
Then $\tilde{\mathcal V}(*D)=\tilde{\mathcal V}\otimes\oo_X(*D)$ is the regular meromorphic connection extending $\mathcal V$, which is a left regular holonomic $\dd$-module a priori. According to \cite[S3.b]{Sai90}, $\tilde{\mathcal V}(*D)$ can be filtered by 
\[F_p\tilde{\mathcal V}(*D)=\sum F_{p-i}\dd_X\cdot F_i\tilde{\mathcal V}_1,\]
where $\tilde{\mathcal V}_1$ is another extension of $\mathcal V$ with eigenvalues in $[-1, 0)$. \par
By Theorem \ref{geneq}, one denotes the pure Hodge module of strict support X corresponding to $\mathbb V$ by $M$. Since $\mathbb V$ is defined outside of a normal crossings divisor, $M$ can be described explicitly in terms of a Deligne extension  (see \cite[Theorem 3.20]{Sai90}). In particular,
\[S(M)=\omega_X\otimes F_{p(M)+n}\mathcal V_2,\]
where $\mathcal V_2$ is another Deligne extension with eigenvalues in $(-1, 0]$. Hence $S(M)$ is locally free. Saito proved that the filtered left $\dd$-module $(\tilde{\mathcal V}(*D), \fdot\tilde{\mathcal V}(*D))$ underlies $j_*j^{-1}M$. This point is clear at least for $\tilde{\mathcal V}(*D)$ without filtration. It is the underlying left $\dd$-module of $j_*j^{-1}M$ because ${\rm DR}(\tilde{\mathcal V}(*D))\simeq Rj_*\mathbb V_\CC$ (see \cite[Theorem 5.2.24]{HTT}). 
Furthermore, by \cite[Proposition 3.11 (3.11.4)]{Sai90}, there is a filtered quasi-isomorphism 
\begin{equation} \label{qi}
{\rm DR}_{(X,D)}(\tilde{\mathcal V}, \fdot\tilde{\mathcal V})\simeq {\rm DR}(\tilde{\mathcal V}(*D), \fdot\tilde{\mathcal V}(*D)).
\end{equation} This filtered quasi-isomorphism will be used repeatedly. \par
Since $\VV$ is finitely filtered by $\fdot\VV$, it is also useful to define (following \cite{Sai91})
\[q(M)=q(\VV):=\text{max}\{p\  |\ {\rm Gr}_p^F(\VV)\neq 0\}.\]
For any pure Hodge module N strictly supported on $X$, besides $S(N)$ we define (following \cite{Sai91})
\begin{equation} \label{Q(M)}
Q_X(N):=\mathbb{D}(S_X(N^*))=R\mathcal{H}om_{\oo_X}(S_X(N^*), \omega_X[n]),
\end{equation}
where $N^*$ is the extension of the dual of the variation of Hodge structure defined on a Zariski open dense subset in the sense of Theorem \ref{geneq}, and $\mathbb{D}$ is the Grothendieck duality functor.  When $X$ is a singular projective variety, $Q_X(N)$ is defined to be 
\[Q_X(N)=R\mathcal{H}om_{\oo_Y}(S_X(N^*), \omega_Y[\text{dim}Y])\] for some smooth $Y$ containing $X$ as a subvariety. This definition is independent with $Y$ because of the well-known fact that the pushforward functor and the Grothendieck duality functor commute.   \par
By \cite[Lemma 2.4]{sai92}, under this normal crossings assumption,
\[Q_X(M)\simeq\frac{\tilde{\mathcal V}}{F_{q(M)-1}\tilde {\mathcal V}}[n].\] Hence one obtains, 
\begin{equation} \label{dualiso}
Q_X(M)\simeq\footnote{ I learned this point from J. Suh.}\ {\rm Gr}^F_{q(M)}{\rm DR}_{(X,D)}(\tilde{\mathcal V}, \fdot)\simeq {\rm Gr}^F_{q(M)}{\rm DR}(\tilde{\mathcal V}(*D), \fdot).
\end{equation}
 The second one is the quasi-isomorphism induced by the filtered quasi-isomorphism \eqref{qi}.
\section{An inductive proof of Kawamata-Viehweg type vanishing theorem}
In this section, I prove the Kawamata-Viehweg type vanishing theorem inductively by the method similar to Kawamata's original approach (see \cite[the proof of Theorem 4.3.1]{Laz}). It will be proved alternatively by the injectivity theorem in Section 7. See the first half of Corollary \ref{kv}.
\begin{theorem}\label{KVV}
Let $X$ be a complex projective variety with polarizable pure Hodge module $M$ strictly supported on $X$ and let $\LL$ be a nef and big line bundle on $X$. Then 
\[H^i(X, S(M)\otimes\LL)=0, \ \ i>0.\]  
\end{theorem}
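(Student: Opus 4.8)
The plan is to run Kawamata's classical covering-and-induction argument, but with the canonical sheaf $\omega_X$ systematically replaced by $S(M)$, and with the crucial cohomological input coming from the Kodaira–Saito vanishing theorem (Theorem \ref{KST}) rather than Kodaira vanishing. First I would reduce to the case where $X$ is smooth and the problem is set up on a good compactification. Since $M$ is strictly supported on $X$, by Theorem \ref{geneq} it corresponds to a generically defined polarizable variation of Hodge structure; after blowing up via a suitable resolution $f\colon \tilde X\to X$, I may assume there is a simple normal crossings divisor $D$ such that $M$ restricts to an honest polarizable VHS on $U=\tilde X\setminus D$. Here Corollary \ref{hdim} is what makes the reduction lossless: since $R^0 f_* S(M_{\tilde X})=S(M)$ and $R^if_*S(M_{\tilde X})=0$ for $i>0$, the Leray spectral sequence identifies $H^i(X,S(M)\otimes\LL)$ with $H^i(\tilde X, S(M_{\tilde X})\otimes f^*\LL)$, and $f^*\LL$ remains nef and big. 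So it suffices to prove the theorem on a smooth $X$.

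Next I would set up the induction. Because $\LL$ is nef and big, for $N\gg 0$ we can write $\LL^N\simeq A\otimes\oo_X(E)$ with $A$ ample and $E$ effective; after a further blow-up I may assume that $E$ together with the branch data is a simple normal crossings divisor. The heart of Kawamata's method is to build a cyclic cover $\pi\colon Y\to X$ ramified along a suitable divisor so that $\pi^*\LL$ becomes (a summand of) something to which an honest vanishing theorem applies. On $Y$ one produces, via the direct image, the relevant Hodge module, and the key point is to control how $S(M)$ behaves under the cover and under taking the relevant graded piece of the Hodge filtration. Concretely, the engine is the identification $S(M)={\rm Gr}^F_{p(M)}{\rm DR}(\mm)$ as the lowest graded de Rham piece, so that Theorem \ref{KST} applied to the pushforward Hodge module on the cover — combined with Proposition \ref{kol}, which says $Rf_*{\rm Gr}^F_p{\rm DR}(\mm)$ splits as a direct sum of graded de Rham pieces of the cohomology modules — converts the ample-twist vanishing upstairs into the desired nef-and-big vanishing downstairs. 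The torsion-freeness statements (Proposition \ref{otorsionfree} and Corollary \ref{torsionfree}) are used to guarantee that the relevant direct summands behave well and that no spurious torsion obstructs the splitting.

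The induction itself would be on $\dim X$. The inductive step is where I would use nearby cycles: restricting to a general member of a very ample linear system (or to a divisor built from $E$) produces a mixed Hodge module via the nearby-cycle functor, whose associated graded pieces are again pure Hodge modules on a lower-dimensional variety, to which the inductive hypothesis applies. The admissibility and the compatibility of the $V$-filtration with the Hodge filtration (equation \eqref{filtr}) are exactly what ensures that $S$ of these nearby-cycle pieces is computed correctly and that the long exact sequence relating $S(M)$, its restriction, and the nearby-cycle contribution is one of coherent sheaves with the expected vanishing.

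The main obstacle I anticipate is the inductive step itself: one must show that the nearby cycle $\psi_t M$ (and its weight-graded pieces) genuinely fits into an adjunction-type exact sequence whose other terms have vanishing higher cohomology by induction, and that the connecting maps are compatible with the twist by $\LL$. Unlike the smooth case where $\psi_t\QQ^H_X$ is explicit, here $M$ is an arbitrary pure Hodge module, so extracting a clean exact triangle relating $S(M)\otimes\LL$ on $X$ to $S$ of the nearby-cycle modules on the divisor requires careful bookkeeping of the Hodge filtration across the $V$-filtration, and this is the step where the full strength of Saito's formalism — rather than a formal Kawamata-covering argument — is indispensable.
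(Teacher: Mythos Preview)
Your reduction to a smooth $X$ via Corollary \ref{hdim} is correct and matches the paper. The gap is in what you do afterwards.

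Your second paragraph proposes building a cover $\pi\colon Y\to X$ so that vanishing for an \emph{ample} twist upstairs, combined with Proposition \ref{kol}, yields the nef-and-big vanishing downstairs. This does not work as stated: $\pi^*\LL$ is still only nef and big, not ample, so Theorem \ref{KST} does not apply on $Y$ any more than it did on $X$. What the Kawamata cover actually buys you is different. After writing $\LL^m\simeq\oo_X(A+E)$ with $A$ ample and $E$ effective and passing to a suitable finite cover, you can arrange two things: the monodromy of the VHS along the boundary becomes \emph{unipotent}, and the pullback $\LL'$ satisfies $\LL'\simeq\oo_Y(H+E')$ with $H$ ample and $E'$ a \emph{reduced} simple normal crossings divisor contained in the boundary. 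You are then still not done; you have reduced to a Norimatsu-type statement, and this is the actual content of the theorem.

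The paper proves that Norimatsu statement by a double induction: on the number $t$ of components of $E'$ and on the dimension. The case $t=0$ is Kodaira--Saito. For the inductive step one works with the dual object $Q(M)$ (the top graded piece of the Deligne extension, cf.\ \eqref{Q(M)} and \eqref{dualiso}) rather than $S(M)$, and restricts the Deligne canonical extension $\tilde\vv$ to a component $E_{k+1}$. The point---which is precisely the ``adjunction-type'' statement you correctly flag as the crux---is that $\tilde\vv|_{E_{k+1}}$ carries the monodromy weight filtration $W_\bullet$ coming from the nilpotent residue, and by the SL$_2$-orbit theorem each ${\rm Gr}^W_l(\tilde\vv|_{E_{k+1}})$ is again the Deligne canonical extension of a polarizable VHS on $E_{k+1}\setminus D$. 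This is where unipotency of the monodromy is used. One then applies the inductive hypothesis on $E_{k+1}$ (lower dimension, and one fewer component of $E'$) to each weight-graded piece. Your suggestion of restricting to a \emph{general} very ample divisor would give a non-characteristic pullback and hence a pure Hodge module directly, but it does not interact with the decomposition $\LL'=H+E'$ and so does not advance the induction; the restriction has to be to the components of $E'$ itself.
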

\begin{proof}
The proof will be divided into two steps. Step 1 is a Norimatsu-type statement (see \cite[Lemma 4.3.5] {Laz}); Step 2 is to reduce the general statement to the case of Step 1.  \\\\
$Step$ 1. In this step, we make the following assumption:
\begin{itemize}
\item X is smooth;\\
\item M is extended from a polarizable variation of Hodge structure $\mathbb V=(\mathcal V, F^{\bullet}\mathcal V, \mathbb{V}_\QQ)$ on $U$, with $D=X\setminus U$ a normal crossings divisor; \\
\item The monodromy of $\VV$ along each irreducible component of $D$  is unipotent;\\
\item $\LL\simeq\oo_X(A+E)$ with $A$ an ample divisor and $E=\sum_{i=1}^tE_i$ a reduced simple normal crossings divisor, and ${\rm Supp}(E)\subset D$. 
\end{itemize}
\par
Denote the Deligne canonical extension of $\vv$ by $\tilde{\vv}$, as in section \ref{snc}. Then the third assumption means the residue of $\tilde\vv$ along each irreducible component of $D$ is nilpotent. As explained in Section \ref{snc},
\[Q(M)=\dfrac{\tilde\vv}{F_{q(M)-1}\tilde{\vv}}[\text{dim}X]={\rm Gr}^F_{q(M)}\tilde\vv[\text{dim}X]. \]
Set $Q'=Q(M)[-\text{dim}X];$ it is a locally free sheaf. \par
Under these assumptions, we show that
\[H^i(X, Q'(-A-E))=0, \ i<\text{dim}X. \]
We use induction on $t$, the number of components of $E$, and on dimension. The case $t=0$ is Kodaira-Saito vanishing because of the filtered quasi-isomorphism \eqref{qi}. Assuming the result known for $E$ with $\leq k$ components, consider the short exact sequence,
\[0\longrightarrow \oo_X(-A-\sum_{i=1}^{k+1}E_i)\longrightarrow \oo_X(-A-\sum_{i=1}^kE_i)\longrightarrow \oo_{E_{k+1}}(-A-\sum_{i=1}^kE_i)\longrightarrow 0.  \] Tensoring with $Q'$ gives,
\begin{equation}\label{ses}
0\longrightarrow Q'(-A-\sum_{i=1}^{k+1}E_i)\longrightarrow Q'(-A-\sum_{i=1}^kE_i)\longrightarrow Q'|_{E_{k+1}}\otimes\oo_{E_{k+1}}(-A-\sum_{i=1}^kE_i)\longrightarrow 0. 
\end{equation}
By the inductive assumption,
\begin{equation}\label{ind1}
H^i(X, Q'(-A-\sum_{i=1}^kE_i))=0,\ i<\text{dim}X.
\end{equation}
Now $\tilde{\vv}|_{E_{k+1}}$ is not a Deligne canonical extension of some variation of Hodge structure defined on some open set of $E_{k+1}$. However, the residue along $E_{k+1}$ gives rise to a finite increasing filtration $W_{\bullet}$ for $\tilde{\vv}|_{E_{k+1}}$, the monodromy weight filtration. Moreover, $Gr^{W}_l(\tilde{\vv}|_{E_{k+1}})$ for any $l$, is the Deligne canonical extension of some polarizable variation of Hodge structure defined on $E_{k+1}\setminus D$, following from the multi-valued SL$_2$-orbit theorem \cite{CKS}. The filtration on $Gr^{W}_l(\tilde{\vv}|_{E_{k+1}})$ induced from the variation of Hodge structure is precisely the filtration induced from $\tilde\vv$, i.e.
\[F_p{\rm Gr}^{W}_l(\tilde{\vv}|_{E_{k+1}})=\dfrac{F_p\tilde\vv|_{E_{k+1}}\cap W_l}{F_p\tilde\vv|_{E_{k+1}}\cap W_{l-1}}.\] 
Namely $\tilde{\vv}|_{E_{k+1}}$ is the Deligne canonical extension of an admissible variation of mixed Hodge structure because of the unipotency assumption of the monodromy. See \cite[Theorem 3.20]{Bru},\footnote{This process is called the graded nearby-cycle functor in \cite{Bru}.} or \cite[Proposition 4.3]{FFS}.\par
By the inductive assumption again,
\[H^i(E_{k+1}, {\rm Gr}^F_{q(M)}{\rm Gr}^W_l\tilde{\vv}|_{E_{k+1}}\otimes\oo_{E_{k+1}}(-A-\sum_{i=1}^kE_i))=0,\] for $i<\text{dim}X-1$ and all $l$. By a simple calculation, there are short exact sequences for any $p$ and $l$,
\[0\longrightarrow\dfrac{F_p\tilde\vv|_{E_{k+1}}\cap W_{l-1}}{F_{p-1}\tilde\vv|_{E_{k+1}}\cap W_{l-1}}\longrightarrow\dfrac{F_p\tilde\vv|_{E_{k+1}}\cap W_l}{F_{p-1}\tilde\vv|_{E_{k+1}}\cap W_{l}}\longrightarrow {\rm Gr}^F_{p}{\rm Gr}^W_l\tilde{\vv}|_{E_{k+1}}\longrightarrow 0.\] Using the long exact sequences of cohomology associated to the above short exact sequences, one sees that
\begin{equation}\label{ind2}
H^i(E_{k+1}, Q'|_{E_{k+1}}\otimes\oo_{E_{k+1}}(-A-\sum_{i=1}^kE_i))=0, \ i<\text{dim}X-1 .
\end{equation}
Using the long exact sequence of cohomology associated to the short exact sequence \eqref{ses}, together with \eqref{ind1} and \eqref{ind2}, we get
\[H^i(X, Q'(-A-\sum_{i=1}^{k+1}E_i))=0, \ i<\text{dim}X. \]
$Step$ 2. Since the Leray spectral sequence degenerates at $E_1$ because of Corollary \ref{hdim}, by passing to a log-resolution (see also the proof of Theorem \ref{semiample}), it suffices to assume that $X$ is smooth, $M$ is extended from a polarizable variation of Hodge structure $\VV$ defined on $U$ with $D=X\setminus U=\sum D_i$ a reduced simple normal crossings divisor, and $\LL^m=\oo(A+E)$ with $A$ an ample divisor and $E$ an effective divisor with support contained in $D$. Denote 
\[E=\sum^t_{i=1} \alpha_iD_i,\ \alpha=\alpha_1\cdot...\cdot \alpha_t, \ \text{and}\  \alpha_i'=\alpha/\alpha_i,\] with $\alpha_i>0$.
Using a Kawamata covering \cite{Kaw},  we can construct a finite flat covering 
\[f:Y\longrightarrow X,\]
such that $f^*D$ still has simple normal crossings support, 
\[f^*D_i=m\alpha_i'D_i',\] for $i=1,... ,t$
and $E'=\sum^t_{i=1}D'_i$ is a reduced simple normal crossings divisor. Furthermore, the monodromy along each component of $f^*D$ of $\VV':=f_1^*\VV$ is unipotent, since $m$ may be as divisible as needed. Here $f_1=f|_{f^{-1}U}$ and the pull-back is just the non-characteristic pull-back for filtered left $\dd$-modules (see Section \ref{nonchar}). Hence $\VV'$ is also a polarizable variation of Hodge structure on $f^{-1}U$. \par
Now put $\LL'=f^*\LL$ and $A'=f^*(A)$, so that
\[(\LL')^m\simeq\oo_Y(A'+m\alpha E').\] 
Hence
\[(\LL'(-E'))^{m\alpha}\simeq(\LL')^{m(\alpha-1)}(A').\] 
Since $\LL'$ is nef, one knows that the line bundle on the right-hand side is also ample. Therefore 
\[\LL'\simeq\oo_Y(H+E'),\]
for some ample divisor $H$ on $Y$. \par
It is obvious that $\VV$ is a direct summand of $f_{1, *}\VV'$.\footnote{Here the push-forward is the Hodge module direct image of $\VV$ after side-change.} Hence so is $M$ of $\mathcal H^0f_*M'$ where $M'$ is the pure Hodge module uniquely determined by $\VV'$. Therefore, $S(M)$ is also a direct summand of $f_*(S(M'))$. Since $f$ is finite, it is sufficient to prove vanishing for $S(M')$ and $\LL'$ on $Y$. By Serre duality (see \eqref{Q(M)}), it is equivalent to prove
\[H^{i}(Y, Q(M'^*)[-n]\otimes (\LL')^{-1})=0, \ i<n,\]
where $M'^*$ is the pure Hodge module extended from the dual of $\VV'$ and  $n=\text{dim}Y$. Consequently, the proof is done by step 1.
\end{proof}
\section{Injectivity in the Normal Crossings Case} 
Let $X$ be a smooth complex projective variety, and $D=\sum^r_{i=1} D_i$ a reduced simple normal crossings divisor with irreducible components $D_i$. Assume $M$ is the polarizable pure Hodge module extending a polarizable variation of Hodge structure $\VV=(\vv, \fdot\vv, \VV_\QQ)$ defined on $U=X\setminus D$.
Then one uses the notations introduced in Section \ref{snc}.\par 
The result in this section owes a lot to the approach of J. Suh to the Kawamata-Viehweg vanishing theorem for Hodge modules in \cite{Suh}. The proof follows closely his argument and a similar argument of C. Schnell in \cite{Sch14a} with a slight refinement that leads to a more general injectivity statement. All of these arguments follow in turn the general strategy of Esnault-Viehweg \cite{EV} towards vanishing and injectivity theorems. \par
Let $\LL$ be a line bundle on $X$. Assume  \[\LL^N\simeq\mathcal{O}_X(D')\]
for some $N$ large enough, such that \[D'=\sum^r_{i=1} \alpha_i D_i,  \ \ \ 0\leq\alpha_i \ll N,\] and $D'\neq0$.
\begin{theorem} \label {thm1}
Let $E=\sum^r_{i=1}\mu_i D_i$ be an effective divisor such that ${\rm Supp}(E)\subset {\rm Supp}(D')$. Then for all $i$, the natural map induced by $E$
\[H^i(X, {\rm Gr}^F_{q(M)} {\rm DR}(\tilde{\mathcal V}(*D))\otimes \LL^{-1}(-E))\longrightarrow H^i(X, {\rm Gr}^F_{q(M)} {\rm DR}(\tilde{\mathcal V}(*D))\otimes \LL^{-1})\] 
is surjective.
\end{theorem}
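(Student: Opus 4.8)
The plan is to follow the Esnault--Viehweg strategy \cite{EV}, in the form adapted to Hodge modules by Suh \cite{Suh} and Schnell \cite{Sch14a}. Write $\mathcal W=\tilde{\mathcal V}\otimes\LL^{-1}$ and $\mathcal W'=\tilde{\mathcal V}\otimes\LL^{-1}(-E)$ for two filtered logarithmic connections on $X$ differing only by their lattice along $D$; the map in the statement is the one induced on top graded de Rham pieces by the inclusion $\mathcal W'\hookrightarrow\mathcal W$, i.e.\ multiplication by the canonical section $s_E$ of $\mathcal{O}_X(E)$. Since ${\rm Gr}^F_{q(M)}{\rm DR}_{(X,D)}(-)$ is computed from an $\mathcal{O}_X$-linear complex and $\LL^{-1}$ contributes a filtration jump in a single degree, one has ${\rm Gr}^F_{q(M)}{\rm DR}_{(X,D)}(\mathcal W)\simeq {\rm Gr}^F_{q(M)}{\rm DR}_{(X,D)}(\tilde{\mathcal V})\otimes\LL^{-1}\simeq {\rm Gr}^F_{q(M)}{\rm DR}(\tilde{\mathcal V}(*D))\otimes\LL^{-1}$ by the filtered quasi-isomorphism \eqref{qi}, and similarly with $(-E)$; so it suffices to prove surjectivity for the top graded pieces of $\mathcal W'$ and $\mathcal W$. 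To equip $\LL^{-1}$ with its logarithmic connection I would use $\LL^N\simeq\mathcal{O}_X(D')$ with $0\le\alpha_i\ll N$: a Kawamata-type cyclic cover \cite{Kaw} $\sigma\colon Y\to X$ branched along $D'$ (normalized, then resolved) exhibits $\LL^{-1}$, with residue $\alpha_i/N$ along $D_i$, as an eigen-summand of $\sigma_*\mathcal{O}_Y$.

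Two inputs feed the argument. First, a \emph{residue comparison}: I would show that the lattice inclusion induces a quasi-isomorphism ${\rm DR}_{(X,D)}(\mathcal W')\xrightarrow{\ \sim\ }{\rm DR}_{(X,D)}(\mathcal W)$ of the full logarithmic de Rham complexes. Both compute the cohomology of the same local system on $U$, and the hypotheses ${\rm Supp}(E)\subseteq{\rm Supp}(D')$ and $\alpha_i\ll N$ are exactly what keeps the relevant residue eigenvalues along each $D_i$ off the positive integers that would obstruct Deligne's comparison; this is the Esnault--Viehweg comparison lemma, and it makes $s_E$ an \emph{isomorphism} on $H^i$ of the full complexes. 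Second, \emph{degeneration}: because $\sigma$ is finite and étale over $U$, the pullback $\sigma_1^*\VV$ (with $\sigma_1=\sigma|_{\sigma^{-1}U}$) is again a polarizable variation of Hodge structure, and its localized direct image under $\sigma$ is a mixed Hodge module on $X$ whose underlying filtered $\dd$-module contains the one underlying $\tilde{\mathcal V}\otimes\LL^{-1}(*D)$ as a direct summand. Hence Corollary \ref{degen} applies, the Hodge--de Rham spectral sequence of ${\rm DR}_{(X,D)}(\mathcal W)$ degenerates at $E_1$, and the filtration on hypercohomology is strict, so
\[0\to H^i(F_{q(M)-1}{\rm DR}_{(X,D)}(\mathcal W))\to H^i({\rm DR}_{(X,D)}(\mathcal W))\to H^i({\rm Gr}^F_{q(M)}{\rm DR}_{(X,D)}(\mathcal W))\to 0\]
is exact; in particular the map to the top graded piece is surjective.

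I would then conclude by a diagram chase in the square
\[
\begin{CD}
H^i({\rm DR}_{(X,D)}(\mathcal W')) @>>> H^i({\rm Gr}^F_{q(M)}{\rm DR}_{(X,D)}(\mathcal W'))\\
@VV{s_E}V @VV{s_E}V\\
H^i({\rm DR}_{(X,D)}(\mathcal W)) @>>> H^i({\rm Gr}^F_{q(M)}{\rm DR}_{(X,D)}(\mathcal W))
\end{CD}
\]
whose horizontal maps are the natural quotient-to-top-graded maps and whose vertical maps are $s_E$. The square commutes because $s_E$ is $\mathcal{O}_X$-linear and preserves $F$; the left vertical map is an isomorphism by the first input; and the bottom horizontal map is surjective by the second. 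Given a class $x$ in the lower-right corner, lift it along the bottom map to $H^i({\rm DR}_{(X,D)}(\mathcal W))$, carry it back by the left isomorphism, push it across the top map, and read off from commutativity that its image under the right vertical map is $x$. Hence the right vertical map — the map of the theorem — is surjective. Note that the chase needs the degeneration only for $\mathcal W$, and only surjectivity (not injectivity) of the left vertical map.

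The main obstacle I expect is the second input. Making it precise requires showing that $\mathcal W$, \emph{together with its Hodge filtration}, really is a direct summand, as a filtered $\dd$-module underlying a mixed Hodge module, of the localized direct image from the resolved cover: one must control the Hodge filtration on the eigensheaf decomposition of $\sigma_*\mathcal{O}_Y$ after resolving the singular normalization, and verify the compatibility of the Deligne extension with the cover (in particular that the top graded index remains $q(M)$). This is the Hodge-theoretic heart of the argument, supplied by the $\mathrm{SL}_2$-orbit analysis of \cite{CKS} and by Suh's work \cite{Suh}; by contrast the residue comparison is routine bookkeeping and the final chase is formal.
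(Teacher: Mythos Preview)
Your proposal is correct and follows essentially the same Esnault--Viehweg strategy as the paper: a cyclic cover to realize $\tilde{\mathcal V}\otimes\LL^{-1}$ as a filtered direct summand of something underlying a mixed Hodge module (giving $E_1$-degeneration via Corollary~\ref{degen}), the residue comparison \cite[Lemma~2.10]{EV} for the quasi-isomorphism of full log de Rham complexes, and the same commutative-square chase.

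One simplification in the paper is worth noting, since it dissolves what you flag as the ``main obstacle.'' Rather than resolving the singular cyclic cover and pushing forward $\sigma_1^*\VV$, the paper stays on $X$: it observes that $\pi'$ is \'etale over $U$, so $\VV_1=(\pi'_*\mathcal O_{U'},\fdot,\pi'_*\QQ_{U'})$ with trivial filtration is already a polarizable variation of Hodge structure on $U$, and one simply forms $\VV\otimes\VV_1$ there. The mixed Hodge module is then $j_*j^{-1}M_1$, whose underlying filtered left $\dd$-module is $\tilde{\mathcal V}(*D)\otimes\bigoplus_{i=0}^{N-1}\LL^{(i)^{-1}}(*D)$ with the obvious eigensheaf decomposition; the condition $\alpha_i\ll N$ then makes $\tilde{\mathcal V}\otimes\LL^{-1}$ the Deligne canonical extension of the $i=1$ summand on the nose, and the Hodge filtration matches by \eqref{filtrDe}. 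No resolution, no $\mathrm{SL}_2$-orbit analysis, and no worry about the top graded index shifting. Also, a small precision: the residue input you need is not that eigenvalues avoid positive integers, but that along each $D_i\subset{\rm Supp}(E)$ the residues of $\tilde{\mathcal V}\otimes\LL^{-1}(aD_i)$ are strictly positive for all $a\le 0$ (guaranteed since $\alpha_i>0$ there), which is exactly the hypothesis of \cite[Lemma~2.10]{EV}.
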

\begin{proof}
Set 
\[\LL^{(i)^{-1}}:=\LL^{\otimes -i}(\lfloor\dfrac{iD'}{N}\rfloor).\] By \cite[Theorem 3.2]{EV}, one knows each $\LL^{(i)^{-1}}$ has a flat logarithmic connection and its residue along $D_i$ is just multiplication by $\{\dfrac{i\alpha_i}{N}\}$ (the fractional part of $\dfrac{i\alpha_i}{N}$).
\par Let $\pi: Y\longrightarrow X$ be the cyclic cover obtained by taking the $N$-th root of $D'$. See \cite[\S3]{EV} for the construction of cyclic covers. Set $\pi'=\pi|_{U'=\pi^{-1}(X\setminus D)}$. So $\pi'$ is $\acute{\text{e}}$tale. Since the relative de Rham complex is trivial, we get a variation of Hodge structure (the Gauss-Manin connection of $\pi'$, see \cite[\S10]{PS}),
\[\mathbb V_1:=(\pi'_{*}\oo_{U'}, \fdot\pi'_{ *}\oo_{U'}, \pi'_{*}\QQ_{U'}),\] with trivial filtration. \par 
Since $\pi'_{*}\oo_{U'}=\bigoplus^{N-1}_{i=0}\LL^{(i)^{-1}}|_{U'}$, we have a new variation of Hodge structure
\[\mathbb{V}\otimes\mathbb V_1=\left(\mathcal V\otimes\bigoplus^{N-1}_{i=0}\mathcal{L}^{(i)^{-1}}|_U, F^\bullet, \mathbb V_{\QQ}\otimes \pi'_*\QQ_{U'}\right),\] with 
\[F^p\left(\mathcal V\otimes\bigoplus^{N-1}_{i=0}\mathcal{L}^{(i)^{-1}}|_U\right)=F^p\mathcal V\otimes\bigoplus^{N-1}_{i=0}\mathcal{L}^{(i)^{-1}}|_U.\] 
Then we obtain a mixed Hodge module $j_*j^{-1}M_1$, where $M_1$ is the pure Hodge module corresponding to $\mathbb V\otimes \mathbb V_1$ by Theorem \ref{geneq}.
The filtered left $\dd$-module $\mathcal N$, 
\[\mathcal N:=\tilde{{\mathcal V}}(*D)\otimes\bigoplus^{N-1}_{i=0}\LL^{(i)^{-1}}(*D),\] underlies $j_*j^{-1}M_1$. Hence the filtered left $\dd$-module $\tilde{{\mathcal V}}(*D)\otimes \LL^{(1)^{-1}}(*D)$ is a direct summand of $\mathcal N$. We also know $\tilde {\mathcal V}\otimes \LL^{(1)^{-1}}$ is the Deligne canonical extension of $\mathcal V\otimes \LL^{(1)^{-1}}|_U$ because of the assumption $\alpha_i \ll N$.
Therefore, by filtered quasi-isomorphism \eqref{qi} we have a filtered quasi-isomorphism
\begin{equation}\label{qi1}
{\rm DR}_{(X,D)}(\tilde{\mathcal V}\otimes \LL^{(1)^{-1}}, \fdot)\simeq {\rm DR}(\tilde{\mathcal V}(*D)\otimes \LL^{(1)^{-1}}(*D), \fdot).
\end{equation}
\par By Corollary \ref{degen}, we see that the spectral sequence
\[E^{p,q}_1=H^{p+q}(X, {\rm Gr}^F_{-p}{\rm DR}(\mathcal N,\fdot))\implies H^{p+q}(X, {\rm DR}(\mathcal N))\]
degenerates at $E_1$. Hence as a direct summand, we know that the spectral sequence (since $\LL^{(1)^{-1}}=\LL^{-1}$)
\[E^{p,q}_1=H^{p+q}(X, {\rm Gr}^F_{-p}{\rm DR}_{(X,D)}(\tilde{\mathcal V}\otimes \LL^{-1}, \fdot))\implies H^{p+q}(X, {\rm DR}_{(X,D)}(\tilde{\mathcal V}\otimes \LL^{-1}, \fdot))\]
degenerates at $E_1$. From the filtration of $\mathbb V\otimes \mathbb V_1$ and the definition of the filtration on the Deligne canonical extension (equation \eqref{filtrDe}), it is clear that
\begin{equation}\label{iso}
{\rm Gr}^F_{p}{\rm DR}_{(X,D)}(\tilde{\mathcal V}\otimes \LL^{-1}, \fdot))={\rm Gr}^F_{p}{\rm DR}_{(X,D)}(\tilde{\mathcal V}, \fdot))\otimes \LL^{-1}.
\end{equation}
For any integer $a\leq 0$, all the eigenvalues of residue of $\tilde {\mathcal V}\otimes \LL^{-1}(aD_i)$ along $D_i$ (irreducible components of $E$) are strictly positive (\cite[Lemma 2.7]{EV}). Therefore, by repeatedly applying \cite[Lemma 2.10]{EV} for the components of $E$, we see that the natural map 
\begin{equation}\label{qwf}
 {\rm DR}_{(X,D)}(\tilde {\mathcal V}\otimes \LL^{-1}(-E))\longrightarrow {\rm DR}_{(X,D)}(\tilde {\mathcal V}\otimes \LL^{-1})
\end{equation}
is a quasi-isomorphism (without filtrations).
If one puts the trivial filtration on $\LL^{-1}(-E)$, we get another spectral sequence
\[E^{p,q}_1=H^{p+q}(X, {\rm Gr}^F_{-p}{\rm DR}_{(X,D)}(\tilde{\mathcal V}\otimes \LL^{-1}(-E), \fdot))\implies H^{p+q}(X, {\rm DR}_{(X,D)}(\tilde{\mathcal V}\otimes \LL^{-1}(-E), \fdot))\]
Clearly as before, 
\begin{equation}\label{iso1}
{\rm Gr}^F_{p}{\rm DR}_{(X,D)}(\tilde{\mathcal V}\otimes \LL^{-1}(-E), \fdot))={\rm Gr}^F_{p}{\rm DR}_{(X,D)}(\tilde{\mathcal V}, \fdot))\otimes \LL^{-1}(-E).
\end{equation}
Now this spectral sequence may not degenerate at $E_1$. However, one has the following diagram,
\begin{diagram}
 H^{p-q(M)}(X, \mathcal A)  &\lTo{\alpha}      &H^{p-q(M)}(X, \mathcal C) \\
\uTo_{\beta}   &                   &\uTo_{\gamma}\\
H^{p-q(M)}(X, \mathcal B) &\lTo         &H^{p-q(M)}(X, \mathcal D),
\end{diagram}
where $\mathcal A={\rm Gr}^F_{q(M)}{\rm DR}_{(X,D)}(\tilde{\mathcal V}\otimes \LL^{-1}, \fdot))$, $\mathcal B={\rm Gr}^F_{q(M)}{\rm DR}_{(X,D)}(\tilde{\mathcal V}\otimes \LL^{-1}(-E), \fdot))$, $\mathcal C={\rm DR}_{(X,D)}(\tilde {\mathcal V}\otimes \LL^{-1})$ and $\mathcal D={\rm DR}_{(X,D)}(\tilde {\mathcal V}\otimes \LL^{-1}(-E))$.\par
Observe that $\alpha$ is surjective because of $E_1$ degeneracy. Also, $\gamma$ is an isomorphism because of the quasi-isomorphism \eqref{qwf}. Hence,
\[\beta:H^{i}(X, {\rm Gr}^F_{q(M)}{\rm DR}_{(X,D)}(\tilde{\mathcal V}\otimes \LL^{-1}(-E), \fdot)))\longrightarrow H^{i}(X, {\rm Gr}^F_{q(M)}{\rm DR}_{(X,D)}(\tilde{\mathcal V}\otimes \LL^{-1}, \fdot))) \] is surjective for all $i$. Combining this with the filtered quasi-isomorphism \eqref{qi1} and the isomorphisms \eqref{iso} and \eqref{iso1}, the proof is finished.

\end{proof}
\begin{remark} \label{re1}
In the above proof, the condition $0<\alpha_i\ll N$ is needed because it ensures that  $\tilde {\mathcal V}\otimes \LL^{-1}$ is the Deligne canonical extension. Therefore, Theorem \ref{thm1} is still true if we require that 
\[\alpha_i+\lambda_{D_i}<1\] instead, where $\lambda_{D_i}$ is the maximal eigenvalue of the residue of $\tilde {\mathcal V}$ along $D_i$. 
\end{remark}
An injectivity statement for $S(M)$ follows from the above theorem as follows. By \eqref{dualiso},
\[H^b(X, {\rm Gr}^F_{q(M)}{\rm DR}(\tilde{\mathcal V}(*D), \fdot)\otimes \LL^{-1})=H^b(X, Q_X(M)\otimes \LL^{-1}).\] 
By Grothendieck-Serre Duality, 
\[H^i(X, Q_X(M)\otimes \LL^{-1})=H^{-i}(X, S_X(M^*)\otimes \LL)^*.\] Here $M^*$ is the pure Hodge module extended from the dual of the variation of Hodge structure $\mathbb V$. Therefore, the following corollary has been proved by replacing $M$ by $M^*$ in Theorem \ref{thm1}.	
\begin{corol} \label{inj}
Under the assumption of Theorem \ref{thm1},
\[H^i(X, S_X(M)\otimes \LL)\longrightarrow H^{i}(X, S_X(M)\otimes \LL(E))\] is injective for all $i$.
\end{corol}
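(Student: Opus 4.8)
The plan is to obtain the injectivity of the corollary as the Grothendieck--Serre dual of the surjectivity in Theorem~\ref{thm1}, applied not to $M$ itself but to its dual $M^*$. First I would observe that $M^*$, the pure Hodge module extending the dual variation $\VV^*$, again extends a polarizable variation of Hodge structure on $U$ whose monodromy along $D$ is quasi-unipotent; hence the hypotheses of Theorem~\ref{thm1} hold verbatim for $M^*$ together with the same data $\LL$, $D'$ and $E$. Reinterpreting that theorem through \eqref{dualiso} applied to $M^*$ (which identifies ${\rm Gr}^F_{q(M^*)} {\rm DR}$ of the relevant meromorphic connection with $Q_X(M^*)$), it asserts that the map induced by $E$,
\[
H^i\!\bigl(X, Q_X(M^*)\otimes \LL^{-1}(-E)\bigr)\longrightarrow H^i\!\bigl(X, Q_X(M^*)\otimes \LL^{-1}\bigr),
\]
is surjective for every $i$.

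Next I would unwind the duality. By definition \eqref{Q(M)}, and since $(M^*)^*\simeq M$, we have $Q_X(M^*)=\mathbb{D}(S_X(M))=R\mathcal{H}om_{\oo_X}(S_X(M), \omega_X[n])$. Moving a line bundle inside the $\mathcal{H}om$ gives $Q_X(M^*)\otimes \LL^{-1}\simeq \mathbb{D}\bigl(S_X(M)\otimes \LL\bigr)$ and $Q_X(M^*)\otimes \LL^{-1}(-E)\simeq \mathbb{D}\bigl(S_X(M)\otimes \LL(E)\bigr)$, because $(\LL^{-1}(-E))^{-1}=\LL(E)$. Grothendieck--Serre duality on the smooth projective variety $X$ then yields canonical isomorphisms
\[
H^i\!\bigl(X, Q_X(M^*)\otimes \LL^{-1}\bigr)\cong H^{-i}\!\bigl(X, S_X(M)\otimes \LL\bigr)^*,
\]
\[
H^i\!\bigl(X, Q_X(M^*)\otimes \LL^{-1}(-E)\bigr)\cong H^{-i}\!\bigl(X, S_X(M)\otimes \LL(E)\bigr)^*.
\]
Under these identifications the surjection above is the transpose of the map
\[
H^{-i}\!\bigl(X, S_X(M)\otimes \LL\bigr)\longrightarrow H^{-i}\!\bigl(X, S_X(M)\otimes \LL(E)\bigr),
\]
and a transpose is surjective exactly when the original map is injective. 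Since $i$ ranges over all integers, relabelling $-i$ as $i$ gives precisely the asserted injectivity.

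The step I expect to require the most care is checking that the ``map induced by $E$'' is carried correctly through the duality. Concretely, one must verify that the inclusion $\LL^{-1}(-E)\hookrightarrow \LL^{-1}$ used in Theorem~\ref{thm1} is the $\mathbb{D}$-dual of the natural inclusion $\LL\hookrightarrow \LL(E)$ defining the corollary's map, and that this compatibility is functorial at the level of the derived $\mathcal{H}om$, so that the transpose of the surjection is \emph{exactly} the inclusion-induced map on $S_X(M)$-cohomology rather than agreeing with it only up to a unit or an automorphism. Once this naturality is in place, the corollary follows formally from Theorem~\ref{thm1}, the definition \eqref{Q(M)} of $Q_X$, and Serre duality, with no further geometric input needed.
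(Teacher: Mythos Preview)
Your proposal is correct and matches the paper's own argument essentially verbatim: the paper also identifies ${\rm Gr}^F_{q(M)}{\rm DR}(\tilde{\mathcal V}(*D))$ with $Q_X(M)$ via \eqref{dualiso}, applies Grothendieck--Serre duality to pass from $Q_X(M)\otimes\LL^{-1}$ to $S_X(M^*)\otimes\LL$, and then swaps $M$ with $M^*$. The only cosmetic difference is that you perform the substitution $M\mapsto M^*$ before invoking Theorem~\ref{thm1} rather than after, and you make explicit the naturality check for the map induced by $E$ that the paper leaves implicit.
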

\section{Injectivity in the general case}
In this section I prove Theorem \ref{injectivity} from the introduction, and further extensions.
From now on, divisors always mean Cartier divisors. When X is smooth, Cartier and Weil divisors will not be distinguished.
\begin{theorem} \label{semiample}
Let $X$ be a complex projective variety, and let $M$ be a pure Hodge module with strict support $X$. If $\LL$ is a semi-ample line bundle and $E$ an effective divisor with $H^{0}(X, \LL^{v}(-E))\neq 0$ for some $v>0$, then the natural map 
\[H^i(X, S_X(M)\otimes \LL)\longrightarrow H^i(X, S_X(M)\otimes \LL(E))\] 
is injective for all $i$. 
\end{theorem}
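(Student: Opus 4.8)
The plan is to reduce the statement to the normal crossings injectivity of Corollary \ref{inj}, by first exploiting semi-ampleness to manufacture a divisor defining a power of $\LL$ with a very large ``denominator'', and then passing to a log resolution. Concretely, I would first set up the divisor as follows. Since $\LL$ is semi-ample, I fix $k>0$ with $\LL^k$ globally generated, and I fix a nonzero section $s\in H^0(X,\LL^v(-E))$, which exhibits $\LL^v\cong\oo_X(E+D_0)$ for some effective divisor $D_0$. For a large integer $m$ and a \emph{general} member $H\in|\LL^k|$ I then write
\[\LL^{\,v+mk}\cong\oo_X(E+D_0+mH),\]
and set $N=v+mk$ and $D'=E+D_0+mH$, so that $\LL^N\cong\oo_X(D')$, $D'\neq 0$, and $\mathrm{Supp}(E)\subset\mathrm{Supp}(D')$. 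The purpose of the term $mH$ is purely numerical: the coefficients of the components of $E$ and $D_0$ in $D'$ stay bounded while $N\to\infty$, so their fractions $\alpha_i/N$ tend to $0$; and a general $H$ meets the locus over which $M$ is a variation of Hodge structure, so along $H$ the monodromy is trivial and the residue eigenvalue is $0$. Hence, after a resolution, the residue hypothesis of Remark \ref{re1} will hold along every component for $m$ sufficiently large, where for the components of $H$ one uses $v>0$ to get $m/(v+mk)<1$.

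Next I would take a log resolution $\mu\colon X'\to X$, an isomorphism over the locus where $X$ is smooth and $M$ is a variation of Hodge structure, such that $X'$ is smooth and the exceptional locus together with the strict transforms of the polar locus of $\VV$ and of $E$, $D_0$, $H$ form a simple normal crossings divisor $\tilde D$. Let $\tilde M\in\textup{HM}^{X'}(X',l)^p$ be the pure Hodge module on $X'$ extending $\mu^*\VV$ via Theorem \ref{geneq}; since $\tilde M$ and $M$ share the same generic variation of Hodge structure, $M$ is the strict-support-$X$ summand of $\mathcal H^0\mu_*\tilde M$. On $X'$ I put $\LL'=\mu^*\LL$, so that $(\LL')^N\cong\oo_{X'}(\mu^*D')$ with $\mathrm{Supp}(\mu^*D')\subseteq\mathrm{Supp}(\tilde D)$, and I note $\mu^*E\leq\mu^*D'$ is effective with support inside $\mathrm{Supp}(\mu^*D')$. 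I may enlarge $\tilde D$ to contain every component of $\mu^*D'$; along the new components (strict transforms of $H$ and certain exceptional divisors) the residue is $0$, so the eigenvalue hypothesis of Remark \ref{re1} is satisfied there as well.

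Then I would descend cohomology along $\mu$. Because $\mu$ is birational, Corollary \ref{hdim} gives $R^0\mu_*S(\tilde M)\cong S(M)$ and $R^i\mu_*S(\tilde M)=0$ for $i\neq0$; combined with the projection formula for the line bundle $\mu^*\LL$ and the resulting degeneration of the Leray spectral sequence, this yields natural isomorphisms
\[H^i\bigl(X',S(\tilde M)\otimes\mu^*\LL\bigr)\cong H^i\bigl(X,S(M)\otimes\LL\bigr),\]
and likewise after twisting by $\mu^*E$, using $\mu^*(\LL(E))\cong\LL'(\mu^*E)$. Since $\mu^*$ of the tautological section of $\oo_X(E)$ is the tautological section of $\oo_{X'}(\mu^*E)$, these isomorphisms intertwine the two ``multiplication by $E$'' maps. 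Applying Corollary \ref{inj} on $X'$ to the data $(\LL',\mu^*D',\mu^*E)$ — legitimate for $m\gg0$ by Remark \ref{re1} — gives injectivity of
\[H^i\bigl(X',S(\tilde M)\otimes\LL'\bigr)\longrightarrow H^i\bigl(X',S(\tilde M)\otimes\LL'(\mu^*E)\bigr),\]
and transporting this down along $\mu$ proves the theorem.

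The step I expect to be the main obstacle is the compatibility in the descent: one must verify not merely the two cohomology isomorphisms but that they identify the natural maps on $X$ and $X'$, so that injectivity upstairs forces injectivity downstairs. This rests on $S(M)$ being intrinsic to $M$ and on the vanishing $R^{i}\mu_*S(\tilde M)=0$ for $i>0$ from Corollary \ref{hdim}, which ultimately comes from the torsion-freeness in Corollary \ref{torsionfree}. The remaining delicate point is purely numerical, namely arranging $N$ large and $H$ general so that the residue condition of Remark \ref{re1} holds simultaneously along the transforms of $E$, $D_0$, $H$ and every exceptional divisor; but this is exactly what the factor $mH$ is designed to buy.
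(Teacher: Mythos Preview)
Your proposal is correct and is essentially the paper's own argument: reduce to Corollary \ref{inj} by manufacturing a divisor in $|\LL^{N}|$ whose support contains $E$, pass to a log resolution, and descend via Corollary \ref{hdim} and the projection formula. The compatibility of the two ``multiply by $E$'' maps under the Leray/projection-formula identifications is indeed routine, exactly as you say.

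The one point where the paper is cleaner is the order of operations, and it is worth noting because it dissolves precisely the ``remaining delicate point'' you flag. You choose the general $H\in|\LL^{k}|$ on $X$ \emph{before} resolving, and then worry about the residue condition of Remark \ref{re1} along exceptional divisors: if an exceptional $E_j$ lies over the polar locus of $\VV$ and $\mu^*H$ has coefficient $c_j>0$ along $E_j$, then $\alpha_j/N\to c_j/k$ as $m\to\infty$, and nothing forces $c_j/k+\lambda_{E_j}<1$. The paper avoids this entirely by taking the log resolution $f\colon X_1\to X$ of $E+C+(\text{polar locus of }M)$ \emph{first}; then $f^*\LL$ is still semi-ample, and a general $D_1\in|f^*\LL^{u}|$ on the smooth $X_1$ is, by Bertini, transverse to the whole SNC boundary. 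Hence $D_1$ contributes coefficient $1$ along itself (where the residue is $0$) and coefficient $0$ along every exceptional component, and one simply takes $u\gg 0$ so that the bounded coefficients of $f^*E+f^*C$ satisfy $\alpha_i\ll N=u+v$. Reversing your order in this way --- resolve first, then pick the free divisor upstairs --- removes the only genuine wrinkle in your argument; with that swap your proof and the paper's coincide.
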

\begin{proof}
By assumption, write
\[\LL^v\simeq\mathcal{O}_X(E+C)\] 
for an effective divisor $C$. Take a log resolution of $E+C+\text{singular locus of }M$. (Since $M$ is extended from a polarizable variation of Hodge structure defined on a smooth Zariski open set $U$, the singular locus means $X\setminus U.$\footnote{This is not a good definition, because $U$ can be shrunk by subtracting any proper Zariski-closed set as needed.}):
\[f: X_1\longrightarrow X\]
Set $E_1=f^*E$, $C_1=f^*C$. Since $f$ is birational, the variation of Hodge structure also extends to a polarizable pure Hodge module strictly supported on $Y$, called $M_1$, and the singular locus of $M_1$ is just the exceptional divisor of $f$, which is a simple normal crossing divisor.
Clearly, $\LL_1:=f^*(\LL)$ is still semi-ample. Hence, $\LL_1^{u}$ is base-point free for some $u$ large enough. Pick a general divisor $D_1$ in $\abs{\LL_1^{u}}$ so that it is transversal to the exceptional divisor of $f$. Then \[\LL_1^{u+v}\simeq\mathcal{O}_X(D_1+E_1+C_1),\] and $D_1+E_1+C_1$ is a divisor with simple normal crossings support. So the assumption of Corollary \ref{inj} has been fulfilled. \par
By Corollary \ref{hdim},
\[R^if_*S(M_1)=
\begin{cases}
S(M)& i=0\\
0& i>0
\end{cases}.\] Hence by the degeneracy of the Leray spectral sequence and the projection formula,
\[H^b(X_1, S(M_1)\otimes f^*\LL)=H^b(X, S(M)\otimes \LL).\]
The proof is done by Corollary \ref{inj}.
\end{proof}
Recall that a line bundle $\LL$ on $X$ is $f$-semi-ample for a proper morphism $f:X\longrightarrow Y$ of varieties if the natural map \[ f^*f_*\LL^n\longrightarrow \LL^n\] is surjective for some $n>0$. 
As a corollary, we obtain the following torsion-freeness statement, generalizing Corollary \ref{torsionfree}. 
\begin{corol}
Let $f:X\longrightarrow Y$ be a surjective projective morphism of complex algebraic varieties, and let M be a pure Hodge module strictly supported on $X$. If $\LL$ is an $f$-semi-ample line bundle on $X$, then for $i\geq0$
\[R^if_*(S(M)\otimes \LL) \]
is torsion-free.
\begin{proof}
By a standard reduction process (see for instance \cite[proof of Theorem 1-2-3]{KMM}), one can assume $X$ and $Y$ are projective and $\LL$ is semi-ample. Suppose $\mathcal T$ is a torsion sub-sheaf of $R^if_*(S(M)\otimes \LL) $ for some $i$. Pick a sufficient ample line bundle $A$ on Y such that $\mathcal T\otimes A$ is globally generated and 
\[H^i(X, S(M)\otimes L\otimes f^*A)=H^0(Y, A\otimes R^if_*(S(M)\otimes L)).\]
Hence, by Theorem \ref{inj}, for any effective divisor $D$ on Y, the natural map
\[H^0(Y, A\otimes R^if_*(S(M)\otimes \LL))\longrightarrow H^0(Y, A(D)\otimes R^if_*(S(M)\otimes \LL))\]
is injective.
Choose $D$ so that multiplication by the equation of $D$ kills $\mathcal T$. Then $H^0(Y, A\otimes \mathcal T)=0$, which implies $\mathcal T=0$.   
\end{proof}
\end{corol}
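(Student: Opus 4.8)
The plan is to rule out a torsion subsheaf of $R^if_*(S(M)\otimes\LL)$ by playing the injectivity theorem (Theorem \ref{semiample}) off against Serre vanishing, exactly in the spirit of Koll\'ar's original torsion-freeness argument and of Corollary \ref{torsionfree}. First I would run the standard reduction (as in \cite[proof of Theorem 1-2-3]{KMM}): since torsion-freeness is local on $Y$, one may compactify and reduce to the case where $X$ and $Y$ are projective and the $f$-semi-ample bundle $\LL$ is genuinely semi-ample.

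Assume then, for contradiction, that $\mathcal T\subseteq R^if_*(S(M)\otimes\LL)$ is a nonzero torsion subsheaf for some $i$, and let $\mathcal I=\operatorname{Ann}(\mathcal T)\subseteq\oo_X$ be its (nonzero) annihilator ideal sheaf on $Y$. The heart of the argument is to pick an ample line bundle $A$ on $Y$ positive enough that simultaneously: (i) $\mathcal T\otimes A$ is globally generated; (ii) $\mathcal I\otimes A$ has a nonzero global section $s$, necessarily with $s\cdot\mathcal T=0$, so that $D:=\operatorname{div}(s)\in\abs{A}$ kills $\mathcal T$; and (iii) Serre vanishing holds for $A$ and for $A(D)\simeq A^2$, giving, via the projection formula and degeneration of the Leray spectral sequence,
\[H^i(X, S(M)\otimes\LL\otimes f^*A)\cong H^0(Y, A\otimes R^if_*(S(M)\otimes\LL)),\]
together with the analogous identification with $A$ replaced by $A(D)$. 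Because $D$ is chosen inside $\abs{A}$, the twist $A(D)$ stays ample, and this is precisely what lets one impose Serre vanishing for source and target at once.

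Next I would apply Theorem \ref{semiample} to the semi-ample bundle $\LL\otimes f^*A$ (a tensor product of the semi-ample $\LL$ and the semi-ample $f^*A$) together with the effective divisor $f^*D$; the hypothesis $H^0\bigl(X,(\LL\otimes f^*A)^w(-f^*D)\bigr)\neq0$ holds for $w\gg0$, since then $A^{w-1}$ is globally generated and $\LL^w$ is globally generated. This yields injectivity of the natural map
\[H^i(X, S(M)\otimes\LL\otimes f^*A)\longrightarrow H^i(X, S(M)\otimes\LL\otimes f^*A(f^*D)),\]
which, under the two identifications of the previous step, becomes injectivity of multiplication by $s$,
\[H^0(Y, A\otimes R^if_*(S(M)\otimes\LL))\longrightarrow H^0(Y, A(D)\otimes R^if_*(S(M)\otimes\LL)).\]

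To conclude, note that tensoring the inclusion $\mathcal T\hookrightarrow R^if_*(S(M)\otimes\LL)$ by the locally free $A$ keeps it injective, so $H^0(Y,\mathcal T\otimes A)$ embeds into the source above. Every such section is annihilated by $s$ (as $s\cdot\mathcal T=0$) and hence maps to $0$ on the right; injectivity forces $H^0(Y,\mathcal T\otimes A)=0$, and global generation of $\mathcal T\otimes A$ then gives $\mathcal T=0$. The main obstacle is the bookkeeping in the middle step: arranging a single $A$ (and a compatible $D\in\abs{A}$) that at once globally generates $\mathcal T\otimes A$, supplies a torsion-killing section out of $\mathcal I\otimes A$, and secures Serre vanishing for both $A$ and $A(D)$ so that the $X$-side injectivity of Theorem \ref{semiample} can be read off on $Y$. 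Once these choices are lined up, the torsion-killing itself is formal.
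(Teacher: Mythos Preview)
Your proposal is correct and follows essentially the same approach as the paper: reduce to the projective semi-ample case, then use the injectivity theorem (Theorem \ref{semiample}) together with Serre vanishing to kill a putative torsion subsheaf via a suitably chosen effective divisor. Your choice of $D\in\abs{A}$ (so that $A(D)\simeq A^2$) is a minor refinement that makes the Serre-vanishing bookkeeping for the target cleaner than in the paper's slightly terser version, but the argument is otherwise identical.
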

Similarly, we also get an injectivity theorem for nef and big line bundles.
\begin{theorem}\label{nefbig}
Let X be a complex projective variety, and let $M$ be a pure Hodge module with strict support X. If $\LL$ is a nef and big line bundle and $E$ an effective divisor, then
\[H^i(X, S(M)\otimes \LL)\longrightarrow H^i(X, S(M)\otimes \LL(E))\] 
is injective for all $i$.
\end{theorem}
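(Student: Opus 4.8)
The plan is to run the same reduction as in the proof of Theorem \ref{semiample}, replacing the use of semi-ampleness by the asymptotic positivity of a nef and big line bundle, so that everything is brought back to the normal crossings injectivity of Corollary \ref{inj}. First I would take a log resolution $f\colon X_1\to X$ of the union of ${\rm Supp}(E)$ with the singular locus of $M$. Writing $M_1$ for the pure Hodge module on $X_1$ strictly supporting the pulled-back variation of Hodge structure, Corollary \ref{hdim} gives $Rf_*S(M_1)=S(M)$ with no higher direct images, so by the projection formula and the degeneration of the Leray spectral sequence the map on $X$ is identified with the corresponding map on $X_1$ for $f^*\LL$ and $f^*E$. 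Since the pullback of a nef and big line bundle under a birational morphism is again nef and big, this reduces the statement to the case where $X$ is smooth, $M$ is extended from a polarizable variation of Hodge structure on $U=X\setminus D$ with $D$ a simple normal crossings divisor, ${\rm Supp}(E)\subseteq D$, and $\LL$ is nef and big.

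The key step is then to realise $\LL$ itself as a small boundary supported on $D$. Because $\LL$ is big, $\LL^N(-E)$ is effective for $N\gg0$, so I can choose $\Gamma\in\lvert\LL^N\rvert$ with $\Gamma\geq E$; after one further log resolution and a general choice of the remaining section I may assume $\Gamma+D$ has simple normal crossings support and ${\rm Supp}(E)\subseteq{\rm Supp}(\Gamma)$. The crucial point is that, because $\LL$ is nef, the order of vanishing of $\lvert\LL^N\rvert$ along any fixed prime divisor grows sublinearly in $N$: multiplying a general section of some $\LL^{N_1}$ (whose order along a given $D_j$ is a small fraction of $N_1$) by a section of $\LL^{N_2}(-E)$ produces sections of $\LL^N(-E)$ whose order along $D_j$ is less than $\delta N$ for any prescribed $\delta>0$ and $N$ large. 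Writing $\Gamma=\sum_j\gamma_jD_j$, I can therefore arrange $0\le\gamma_j<N$ for every $j$, so that $\LL^N\simeq\oo_X(\Gamma)$ exhibits $\LL$ as a line bundle of exactly the type handled by Corollary \ref{inj}, with $D'=\Gamma$ and $E$ supported on ${\rm Supp}(D')$.

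With this setup I would invoke Corollary \ref{inj} directly to obtain that $H^i(X,S(M)\otimes\LL)\to H^i(X,S(M)\otimes\LL(E))$ is injective for all $i$, and then transport this back to the original $X$ through the identification of the first paragraph. Note that no Kawamata covering is needed here: since $\gamma_j/N\to0$ as $N\to\infty$ while the maximal residue eigenvalue $\lambda_{D_j}$ of $\tilde{\vv}$ along each $D_j$ is fixed and strictly less than $1$, the residue condition $\gamma_j/N+\lambda_{D_j}<1$ of Remark \ref{re1} holds automatically for $N$ large, so $\tilde{\vv}\otimes\LL^{-1}$ is again a Deligne canonical extension and Theorem \ref{thm1} applies verbatim.

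The main obstacle is precisely the coefficient control in the second paragraph. For a merely big line bundle the fixed part of $\lvert\LL^N\rvert$ can have coefficients comparable to $N$, which would force $\lfloor\Gamma/N\rfloor\neq0$ and yield injectivity for $\LL(-\lfloor\Gamma/N\rfloor)$ rather than for $\LL$ itself; the composite would then be injective without giving the desired conclusion. It is exactly the nef hypothesis—the vanishing of the asymptotic multiplicity of $\LL$ along every divisor—that guarantees one can keep $\Gamma$ a genuine boundary containing $E$, and this is where the full nef and big assumption is essential. For $i>0$ one may alternatively bypass this analysis entirely, since the source $H^i(X,S(M)\otimes\LL)$ already vanishes by Theorem \ref{KVV}, while for $i=0$ injectivity is immediate from the torsion-freeness of $S(M)$ established in Proposition \ref{otorsionfree}; I nonetheless prefer the uniform argument above because it stays within the Esnault--Viehweg framework of this section and treats all $i$ at once.
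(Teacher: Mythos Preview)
Your main argument is essentially the paper's: reduce via log resolution and Corollary \ref{hdim} to the smooth SNC situation, then arrange the hypotheses of Corollary \ref{inj} by exhibiting a large power of $\LL$ as an effective divisor with small coefficients containing $E$. The only difference is packaging: where you invoke ``vanishing of the asymptotic multiplicity of a nef and big line bundle'' and then speak of ``one further log resolution and a general choice of the remaining section,'' the paper makes this concrete via Kodaira's lemma. It writes $\LL^v\simeq\oo_X(A+D+E)$ with $A$ ample, passes to a log resolution of $D+E$ together with the singular locus of $M$, and then uses that $H:=\LL^{N-v}(A)$ is ample (nef plus ample) to pick a general $D_1\in|H^m|$ transversal to everything by Bertini, so that $\LL^{mN}\simeq\oo_X(D_1+mD+mE)$ with all coefficients bounded independently of $N$. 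This is precisely the mechanism behind your sublinear-growth claim and removes the vagueness in your second resolution step; no separate control of exceptional multiplicities is needed once the moving part is genuinely ample.

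Your closing shortcut—source vanishes for $i>0$ by Theorem \ref{KVV} and injectivity for $i=0$ by torsion-freeness of $S(M)$ (Proposition \ref{otorsionfree})—is a correct and genuinely different proof. Just note that within the paper's logic it would be mildly circular to quote Corollary \ref{kv}, since that corollary is \emph{derived} from the present theorem; one must instead appeal to the independent inductive proof of Theorem \ref{KVV} in Section~5.
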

\begin{proof}
Since $\LL$ is nef and big, we can write
\[\LL^v\simeq\mathcal{O}_X(A+D+E)\]
for an ample divisor $A$, an effective divisor $D$ and some $v>0$. 
By taking a log resolution for $D+E+\text{singular locus of }M$, it is enough to assume that $X$ is smooth and the singular locus of $M$ is a simple normal crossings divisor containing ${\rm Supp}(D+E)$. Choosing $N$ large enough, then
\[\LL^N\simeq\LL^{N-v}(A)\otimes \mathcal{O}_X(D+E).\]
Since $\LL$ is nef, $H:=\LL^{N-v}(A)$ is ample. Hence  
\[\LL^{mN}\simeq H^m\otimes \mathcal{O}_X(mD+mE)\simeq\mathcal{O}_X(D_1+mD+mE)\] for some sufficiently general $D_1\in \abs{H^m}$, so that all the transversality conditions are satisfied. The statement of the theorem follows from Corollary \ref{inj}.
\end{proof}
\begin{remark}{(Injectivity for nef and abundant line bundles.)} After a series of blowing-ups, nef and abundant ($\kappa(\LL)=v(\LL)$) line bundles can be reduced to semi-ample ones. Therefore, Theorem \ref{nefbig} is also true if $\LL$ is only nef and abundant. (More details can be found in \cite[\S5] {EV}.) 
\end{remark}
Choosing $E$ as a multiple of a very ample divisor, by Serre vanishing we obtain another proof of Theorem \ref{KVV}. 
\begin{corol} \label{kv}
If X is a complex projective variety, $\LL$ a nef and big line bundle, and M a pure Hodge module with strict support X, then
\[H^i(X, S(M)\otimes \LL)= 0, \ \ \ i>0.\]
More generally, if $\LL$ is nef only, then
\[H^i(X, S(M)\otimes \LL)= 0, \ \ \ i>n-\kappa(\LL).\]
\end{corol}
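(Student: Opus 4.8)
The plan is to deduce both statements from the injectivity theorem \ref{nefbig} together with an inductive cutting argument, so that only the nef-and-big case requires genuinely new input.

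For the first assertion I would argue exactly as indicated before the statement. Fix a very ample divisor $A$ and apply Theorem \ref{nefbig} with $E=mA$: the natural map
\[
H^i(X, S(M)\otimes\LL)\longrightarrow H^i(X, S(M)\otimes\LL(mA))
\]
is injective for every $i$ and every $m>0$. Since $\LL$ is nef and $A$ is ample, $\LL(mA)$ is ample, so Serre vanishing gives $H^i(X, S(M)\otimes\LL(mA))=0$ for $i>0$ once $m\gg0$; injectivity then forces $H^i(X, S(M)\otimes\LL)=0$ for $i>0$.

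For the general nef case I would induct on $n-\kappa(\LL)$. If $\kappa(\LL)=-\infty$ there is nothing to prove, and if $\kappa(\LL)=n$ then $\LL$ is nef and big and the first assertion is the case $i>0=n-\kappa(\LL)$, which is the base of the induction. So assume $0\le\kappa(\LL)<n$. First I would reduce to $X$ smooth by passing to a log resolution $\mu\colon X'\to X$ and using Corollary \ref{hdim} together with degeneration of the Leray spectral sequence, as in the proofs of Theorems \ref{KVV} and \ref{nefbig}; this replaces $\LL$ by the nef bundle $\mu^*\LL$ of the same Iitaka dimension. With $X$ smooth, choose a general member $H$ of a very ample system and let $\iota\colon H\hookrightarrow X$ be the inclusion. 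For general $H$ the embedding is non-characteristic for $M$ in the sense of Section \ref{nonchar}, so $M_H:=\iota^*M$ is a polarizable pure Hodge module with strict support $H$; moreover the non-characteristic pull-back formula for the Hodge filtration, combined with adjunction $\omega_{H/X}\simeq\oo_H(H)$, yields the key identification
\[
S(M_H)\simeq S(M)|_H\otimes\oo_H(H).
\]
Since $S(M)$ is torsion-free by Proposition \ref{otorsionfree}, tensoring the structure sequence of $H$ with $S(M)\otimes\LL(H)$ stays exact and produces
\[
0\longrightarrow S(M)\otimes\LL\longrightarrow S(M)\otimes\LL(H)\longrightarrow S(M_H)\otimes\LL|_H\longrightarrow0.
\]
Now $\LL(H)$ is ample, so the first assertion gives $H^j(X, S(M)\otimes\LL(H))=0$ for all $j>0$, and the associated long exact sequence produces isomorphisms $H^{i-1}(H, S(M_H)\otimes\LL|_H)\simeq H^i(X, S(M)\otimes\LL)$ for all $i\ge2$. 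Because $\LL|_H$ is nef with $\kappa(\LL|_H)=\kappa(\LL)$ for general $H$ (as $\kappa(\LL)<n$, the Iitaka fibration of $\LL$ still dominates its image after restriction), the inductive hypothesis on $H$, of dimension $n-1$, gives $H^{i-1}(H, S(M_H)\otimes\LL|_H)=0$ whenever $i-1>(n-1)-\kappa(\LL)$. Since $\kappa(\LL)<n$ forces $n-\kappa(\LL)\ge1$, every index $i>n-\kappa(\LL)$ satisfies $i\ge2$, and we conclude $H^i(X, S(M)\otimes\LL)=0$ for $i>n-\kappa(\LL)$.

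The main obstacle is the geometry of the hyperplane section rather than the homological bookkeeping. One must justify that for general $H$ the restriction is genuinely non-characteristic, that $M_H$ again has strict support $H$ so that the inductive hypothesis truly applies to $S(M_H)$, and above all that $S(M_H)\simeq S(M)|_H\otimes\oo_H(H)$, i.e.\ that the \emph{lowest} graded piece of the Hodge filtration restricts to the lowest piece; this is precisely where the compatibility of the Hodge filtration with non-characteristic restriction from Section \ref{nonchar} is essential. The remaining geometric input, invariance of the Iitaka dimension under a general very ample section when $\kappa(\LL)<n$, is standard.
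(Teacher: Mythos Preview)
Your argument is correct and follows essentially the same route as the paper: injectivity plus Serre vanishing for the nef and big case, then a hyperplane-section induction using the non-characteristic pull-back identification $S(\iota^*M)\simeq S(M)|_H\otimes\oo_H(H)$ together with the vanishing $H^j(X,S(M)\otimes\LL(H))=0$ for $j>0$. The only cosmetic difference is that the paper frames the induction on $\dim X$ and uses the easier inequality $\kappa(\LL|_H)\ge\kappa(\LL)$ rather than your equality claim; since $(n-1)-\kappa(\LL|_H)\le(n-1)-\kappa(\LL)$, the inequality already suffices, so you need not invoke invariance of the Iitaka dimension.
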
 
\begin{proof}
First, $\kappa(\LL)\leq\kappa(f^*\LL)$ for any birational morphism $f: Y\longrightarrow X .$
Hence, it is enough to assume $X$ is smooth, $S_X(M)$ is locally free and $\kappa(\LL)<n$. Choose a general hyperplane section $H$ of a very ample line bundle, such that
\[H^b(X, S(M)\otimes \LL(H))=0, \ \ \ b>0,\]
and $H$ is non-characteristic for $M$.
Therefore, we get a short exact sequence,
\[0\longrightarrow S(M)\otimes \LL\longrightarrow S_X(M)\otimes \LL(H)\longrightarrow S(M)|_H\otimes \mathcal O_H(H)\otimes \LL|_H\longrightarrow 0.\]
Hence, by passing to the long exact sequence of cohomology,
\[H^{i-1}(H, S(M)|_H\otimes \mathcal O_H(H)\otimes \LL|_H)=H^i(X, S(M)\otimes \LL).\]
Since $\kappa(\LL|_H)\geq \kappa(\LL)$ and $ S(M)|_H\otimes \mathcal O_H(H)\simeq S(i^*(M))$, both groups vanish for $i>n-\kappa(\LL)$ by induction on dimension. 
\end{proof}
%The following corollary generalizes the Grauert-Riemenschneider vanishing theorem.
%\begin{corol}
%For $f:X\longrightarrow Y$ a surjective morphism of projective variety, let $L$ be a $f$-nef line bundle and let $M$ be a pure Hodge module of strict support $X$. Then 
%\[Rf^i_*(S(M)\otimes L)=0, \ \ \text{for}\ i>\text{dim}X-\text{dim}Y-\kappa(L|_{F}),\] where $F$ is a general fibre of $f$. 
%\end{corol}
%\begin{proof}
%Pick a ample divisor $A$ on $Y$. Then $L(vf^*A)$ is nef for $v>>0$ and
%\[\kappa(L(vf^*A))\geq \kappa(L|_F)+\text{dim}Y.\]
%Replacing $A$ by $vA$ for some sufficiently large $v$, we can assume that  $\oo_Y(A)\otimes Rf^i_*(S(M)\otimes L)$ is globally generated and 
%\[H^i(X, S(M)\otimes L(f^*A))=H^0(Y, \oo_Y(A)\otimes Rf^i_*(S(M)\otimes L)).\] 
%But by Corollary \ref{c2.4}, for $i>\text{dim}X-\text{dim}Y-\kappa(L|_{F})$
 %\[H^i(X, S(M)\otimes L(f^*A))=0.\]
%And this implies 
 %\[Rf^i_*(S(M)\otimes L)=0,\]
%when $i>\text{dim}X-\text{dim}Y-\kappa(L|_{F})$.
%\end{proof}

Since the eigenvalues of the Deligne canonical extension lie in $[0, 1)$, the injectivity theorem is still true if the nef and big line bundle is perturbed by an effective $\mathbb Q$-divisor of sufficiently small coefficients in the following sense.
\begin{theorem}
Let X be a projective variety with a polarizable pure Hodge module $M$ strictly supported on X, and let $N$ be a nef and big $\QQ$-divisor, $D$ and $B$ two effective divisors. There exists an $\varepsilon=\varepsilon(M, N, D, B)>0$ such that if a line bundle $\LL\sim_{\QQ}N+ \varepsilon_1 D$ for $0<\varepsilon_1<\varepsilon$, then the natural map 
\[H^i(X, S(M)\otimes \LL)\longrightarrow H^i(X, S(M)\otimes \LL(B)) \]
is injective for all $i$.
\end{theorem}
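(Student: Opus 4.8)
The plan is to reduce to the simple normal crossings situation and then apply the injectivity statement of Corollary \ref{inj} in its refined form (Remark \ref{re1}), the whole point being that the eigenvalue gap $1-\lambda_{D_i}>0$ afforded by the Deligne canonical extension leaves room to absorb both the fixed small multiple $\varepsilon_1 D$ and a negligible remainder coming from the nef and big part $N$.

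First I would take a log resolution $f\colon X_1\to X$ of $N+D+B$ together with the singular locus of $M$, so that on $X_1$ the module $M$ is replaced by a pure Hodge module $M_1$ strictly supported on $X_1$ extending the same generic variation of Hodge structure, whose singular locus $\Delta=\sum_j\Delta_j$ is a simple normal crossings divisor, and so that $\bar N:=f^*N$, $\bar D:=f^*D$ and $f^*B$ are all supported on a simple normal crossings divisor. Exactly as in the proofs of Theorem \ref{semiample} and Theorem \ref{nefbig}, Corollary \ref{hdim} together with the projection formula and the degeneration of the Leray spectral sequence identifies the map in question with
\[H^i(X_1, S(M_1)\otimes \LL_1)\longrightarrow H^i(X_1, S(M_1)\otimes \LL_1(f^*B)),\qquad \LL_1:=f^*\LL,\]
so it suffices to prove injectivity on $X_1$. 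Note that $\LL_1\sim_\QQ \bar N+\varepsilon_1\bar D$ with $\bar N$ nef and big, and that the eigenvalues $\lambda_{\Delta_j}\in[0,1)$ of the residue of $\tilde{\mathcal V}$ along $\Delta_j$ depend only on $M$ and $f$; I set $\delta_0:=\min_j(1-\lambda_{\Delta_j})>0$ and will fix $\varepsilon=\varepsilon(M,N,D,B)$ at the end so that $\varepsilon\cdot\max_j\operatorname{mult}_{\Delta_j}(\bar D)<\delta_0/2$.

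The heart of the argument is to exhibit, for every $0<\varepsilon_1<\varepsilon$, an isomorphism $\LL_1^{m}\simeq\mathcal{O}_{X_1}(D')$ with $D'$ supported on a simple normal crossings divisor containing $f^*B$ and with every boundary coefficient of $\tfrac1m D'$ strictly below the gap $1-\lambda_{D_i}$, so that Corollary \ref{inj} applies in the form of Remark \ref{re1} with $E=f^*B$. Here the nefness of $\bar N$ is decisive. Fixing a small ample $\QQ$-divisor $A_0$ with $\bar N\sim_\QQ A_0+G$ for an effective $G$ (Kodaira's lemma), I write for a large integer $k$
\[k\LL_1\sim_\QQ \bigl[(k-1)\bar N+A_0\bigr]+G+k\varepsilon_1\bar D,\]
in which $(k-1)\bar N+A_0$ is ample. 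Replacing this ample class (after a harmless perturbation $-\theta f^*B$, with $\theta>0$ small, to force $f^*B$ into the support) by a general member of a large multiple — smooth, transversal to the boundary, and thus carrying residue eigenvalue $0$ — and dividing by $k$, the coefficient of a boundary component $\Delta_j$ in the resulting $\tfrac1m D'$ becomes $\tfrac1k\operatorname{mult}_{\Delta_j}(G)+\varepsilon_1\operatorname{mult}_{\Delta_j}(\bar D)$ up to a term controlled by $\theta$. Choosing $k$ large so the first summand is $<\delta_0/2$ for all $j$, and using $\varepsilon_1<\varepsilon$ for the second, every boundary coefficient is $<\delta_0\le 1-\lambda_{\Delta_j}$, while along the movable and newly introduced components the coefficient is $\tfrac1m<1$ with vanishing eigenvalue. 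Corollary \ref{inj} then yields injectivity on $X_1$, hence on $X$.

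The main obstacle is precisely this last construction. Since $\LL_1$ itself need not be nef, one cannot move most of $\LL_1$ into a general ample divisor as in Theorem \ref{nefbig}. The key observation is that it is the nef and big summand $\bar N$ — not $\LL_1$ — that carries the positivity, so one moves $(k-1)\bar N+A_0$ into the movable ample part and is left only with the fixed effective divisor $G$, whose coefficients are killed by $\tfrac1k$ as $k\to\infty$, together with the genuinely small perturbation $\varepsilon_1\bar D$. The role of $\varepsilon$ is then exactly to guarantee that this surviving perturbation fits inside the eigenvalue gap $1-\lambda_{\Delta_j}$ coming from the Deligne canonical extension; this is where the hypothesis that the eigenvalues lie in $[0,1)$ is indispensable.
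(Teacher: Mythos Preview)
Your approach is essentially the paper's: reduce via a log resolution and Corollary \ref{hdim}, use Kodaira's lemma together with the observation that $(\text{nef})+(\text{ample})$ is ample to push almost all of $\bar N$ into a general ample member with negligible boundary coefficient, bound the residual $\varepsilon_1\bar D$ by the eigenvalue gap, and invoke Corollary \ref{inj} in the refined form of Remark \ref{re1}. Your organisation of the nef absorption via $(k-1)\bar N+A_0$ is in fact a little cleaner than the paper's two-parameter $(k,l)$ version.

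There is, however, one genuine gap: the order in which you resolve and apply Kodaira's lemma. You first pass to $X_1$ and only then write $\bar N\sim_\QQ A_0+G$ on $X_1$; but nothing forces the effective divisor $G$ produced by Kodaira's lemma on $X_1$ to have simple normal crossings support or to meet the boundary $\Delta\cup\operatorname{Supp}(\bar D)\cup\operatorname{Supp}(f^*B)$ transversally. Since $G$ appears in your final $D'$, the SNC hypothesis needed for Corollary \ref{inj} is not verified. The paper fixes this by applying Kodaira's lemma on $X$ \emph{before} resolving: one writes $nN\sim A+C+B$ with $A$ ample on $X$ and $C$ effective, and then takes the log resolution of $C+B+D$ together with the singular locus of $M$. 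On $X_1$ the pullback $f^*A$ is only nef and big, but one can write $pf^*A=A_1+E$ with $A_1$ ample and $E$ effective supported on the exceptional locus, which is already SNC; now every fixed effective piece has SNC support and the rest of your argument goes through unchanged. (A minor bookkeeping point: your $\delta_0/2+\delta_0/2$ budget leaves no slack for the $\theta f^*B$ term, so take the individual bounds slightly smaller.)
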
   
\begin{proof}
Since $N$ is nef and big, by Kodaira's lemma,
\[nN\sim A+C+B\]
for some $0<n\in \ZZ$, $A$ an ample divisor and $C$ an effective divisor. 
Take a log-resolution for $C+B+D+\text{singular locus of }M$, 
\[f: Y\longrightarrow X.\]
Then $A_1:=f^*pA-E$ is ample for some $p\ll0$ and $E$ an effective divisor with simple normal crossings support. Hence,
\[f^*npN\sim A_1+E+pf^*C+pf^*B.\]
Write $f^*D=\sum d_iD'_i$.  
Then we can assume $U$ is Zariski open in $Y$ and $Y\setminus U$ is a simple normal crossings divisor containing the support of all effective divisors that appear, and there is a variation of Hodge structure $\VV$ on $U$ which corresponds to $M$ by the equivalence of Theorem \ref{geneq}. Take $\varepsilon$ to be 
\[\varepsilon=\min\{\dfrac{1-\lambda_{D'_i}}{d_i}\},\] where $\lambda_{D'_i}$ is the maximal eigenvalue of the residue of the Deligne canonical extension of $\VV$ along $D'_i$.
If a line bundle $\LL\sim_{\QQ}N+ \varepsilon_1 D$ for $0<\varepsilon_1<\varepsilon$, then
\[f^*\LL^k\sim \dfrac{k}{np}(A_1+E+pf^*C+pf^*B)+k\varepsilon_1 f^*D,\] where $k$ is some big enough multiple of $np$. Since $f^*\LL^l(-\varepsilon_1 lD)$ is a nef line bundle for $l\ll0$ and sufficiently divisible, 
\[f^*\LL^{k+l}\sim A_2+\dfrac{k}{np}(E+pf^*C+pf^*B)+(k+l)\varepsilon_1 f^*D,\] for some other ample divisor $A_2$. Hence
\[f^*\LL^{(k+l)m}\sim H+\dfrac{km}{np}(E+pf^*C+pf^*B)+(k+l)m\varepsilon_1 f^*D,\] where $H\in |\oo(mA_2)|$ is sufficiently general.
The statement then follows from Corollary \ref{inj} and Remark \ref{re1}.

\end{proof}
Note that $\varepsilon$ in the statement depends on the choice of a resolution, but can be made effective once one has been fixed; see also Remark \ref{qdiv} below.
A similar argument works for the following Kawamata-Viehweg type vanishing for $\QQ$-divisors.
\begin{theorem}
Let $X$ be a projective variety with a pure Hodge module $M$ strictly supported on X, and let $N$ be a nef and big $\QQ$-divisor and $D$ an effective divisor. There exists an $\varepsilon=\varepsilon(M, N, D)>0$ such that if a line bundle $\LL\sim_{\QQ}N+\varepsilon_1 D$ for $0<\varepsilon_1<\varepsilon$, then 
\[H^i(X, S(M)\otimes \LL)=0,\ \ \ i>0.\]
\end{theorem}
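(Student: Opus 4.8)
The plan is to deduce this vanishing from the ($\QQ$-divisor) injectivity theorem just proved, in exactly the spirit in which the nef-and-big injectivity theorem yields Theorem \ref{KVV} (see the remark preceding Corollary \ref{kv}): one lets the perturbing effective divisor play the role of a large multiple of an ample divisor and then invokes Serre vanishing. Since the injectivity theorem is already stated for arbitrary projective $X$, I would apply it directly on $X$ rather than pass to a resolution myself. This is the same argument the author alludes to by ``a similar argument works''.

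The one thing that must be checked is that the $\varepsilon$ supplied by the injectivity theorem can be taken independent of the auxiliary divisor I am about to introduce. Recall that in the proof of that theorem $\varepsilon$ is produced as $\min_i\{(1-\lambda_{D'_i})/d_i\}$, where $f:Y\to X$ is a log-resolution of $C+B+D+(\text{singular locus of }M)$ (with $nN\sim A+C$ coming from Kodaira's lemma), $f^*D=\sum_i d_iD'_i$, and $\lambda_{D'_i}$ is the maximal eigenvalue of the residue of the Deligne canonical extension of $\VV$ along $D'_i$. Thus $\varepsilon$ only sees the resolution together with the components of $f^*D$ and their residue eigenvalues. First I would fix a log-resolution $f$ of $C+D+(\text{singular locus of }M)$ and read off $\varepsilon=\varepsilon(M,N,D)$ from this data alone.

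Now, given a line bundle $\LL\sim_\QQ N+\varepsilon_1 D$ with $0<\varepsilon_1<\varepsilon$, I would choose a very ample divisor $H$ on $X$ and apply the injectivity theorem with $B$ a general member of $|mH|$ for $m\gg0$. Since $H$ is very ample, a general member of $|mH|$ is smooth and meets $Y\setminus U$ transversally (Bertini), so the same $f$ already log-resolves $C+B+D+(\text{singular locus of }M)$; consequently $f^*D$ and all the eigenvalues $\lambda_{D'_i}$ are unchanged, and the very same $\varepsilon=\varepsilon(M,N,D)$ applies. The injectivity theorem then gives that
\[H^i(X,S(M)\otimes\LL)\longrightarrow H^i(X,S(M)\otimes\LL(mH))\]
is injective for every $i$.

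Finally, since $H$ is ample, $\LL\otimes\oo_X(mH)$ is ample for $m\gg0$ regardless of $\LL$, and $S(M)$ is a coherent sheaf on $X$. By Serre vanishing the target $H^i(X,S(M)\otimes\LL(mH))$ vanishes for all $i>0$ once $m$ is large, and injectivity then forces $H^i(X,S(M)\otimes\LL)=0$ for $i>0$, as required. I expect the genuine point of the argument to be precisely this uniformity of $\varepsilon$: one must choose $B=mH$ generically so that it contributes no new component to $f^*D$ and perturbs none of the relevant residue eigenvalues, so that $\varepsilon$ stays fixed as $m\to\infty$. Everything else is a formal combination of the injectivity theorem with Serre vanishing.
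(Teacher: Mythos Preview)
Your argument is correct and matches what the paper intends by ``a similar argument works'': reduce to the situation of Corollary \ref{inj} via the same log-resolution and choice of $\varepsilon$ as in the preceding injectivity theorem, then take $B$ to be a general member of $|mH|$ and conclude by Serre vanishing exactly as in the passage before Corollary \ref{kv}. Your identification of the one genuine point---that $\varepsilon$ must be independent of $m$, which is guaranteed because a general $B\in|mH|$ pulls back to a smooth divisor transverse to the fixed boundary on $Y$, so neither the components of $f^*D$ nor the residue eigenvalues $\lambda_{D'_i}$ change---is precisely the content behind the author's remark that $\varepsilon$ ``depends on the choice of a resolution, but can be made effective once one has been fixed''.
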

\begin{remark}\label{qdiv}
If X is a smooth projective variety, $M=\QQ^H_X$ (or more generally for any smooth $M$, i.e. $M$ corresponding to a polarizable variation of Hodge structure defined on $X$), and $D$ is a reduced simple normal crossings divisor, then since there is no residue under these assumptions, $\varepsilon=1$ by \cite[Theorem 9.4.17(i)]{Laz}. Hence the above theorem reduces to the original Kawamata-Viehweg vanishing for $\QQ$-divisors \cite[Theorem 9.1.18]{Laz}, or a special case of \cite[Theorem 11.1]{Pop}.   
\end{remark}

\bibliographystyle{amsalpha}

\end{document}